\documentclass[3p]{elsarticle}

\usepackage{amsmath}
\usepackage{amsthm}
\usepackage{amssymb}
\usepackage{bm}
\usepackage{accents}
\usepackage{color}
\usepackage{nicematrix}
\usepackage{booktabs}  
\usepackage[pdftex,plainpages=false,colorlinks=true,citecolor=RoyalBlue,linkcolor=Red,urlcolor=black,filecolor=black,bookmarksopen=true]{hyperref}

\newtheorem{thm}{Theorem}

\newdefinition{rmk}{Remark}
\newproof{pf}{Proof}

\numberwithin{equation}{section}

\graphicspath{{figs/}}

\begin{document}

\begin{frontmatter}
	
\title{Computational homogenization of unsteady flows in a periodic porous medium\tnoteref{label1}}
\tnotetext[label1]{The work was supported by the Russian Science Foundation (grant No. 24-11-00058).}

\author{P.N. Vabishchevich\fnref{lab1,lab2}}
\ead{vab@cs.msu.ru}

\address[lab1]{Lomonosov Moscow State University, 1, building 52, Leninskie Gory, 119991 Moscow, Russia}

\address[lab2]{North Caucasus Federal University, 1, Pushkin str., 355017 Stavropol, Russia}

\begin{abstract}

The work is devoted to the development and computational implementation of the homogenization method for modeling unsteady flows of a viscous incompressible fluid in periodic porous media taking into account memory effects. At the macrolevel, the flow is described by an integro-differential Darcy law with a tensor memory kernel determined by solving unsteady problems on the periodicity cell.
The developed approach to computational homogenization is based on finding the steady-state and unsteady components of the conductivity tensor from solving auxiliary boundary value and spectral problems on the periodicity cell. The nonlocal macroscopic problem is transformed into a local system of differential equations by approximating the memory kernel as a sum of exponentials.
Issues of spatial finite element approximation are discussed, and stable two-level schemes in time are constructed. The results of applying the developed computational homogenization technology for unsteady filtration problems in porous media to a two-dimensional test problem are presented.

\end{abstract}

\begin{keyword}
Unsteady flows in porous media \sep 
Memory Darcy law \sep 
Integro-differential equation \sep
Exponential sum approximation \sep
Two-level time discretization schemes

\MSC 76S05 \sep 35B27 \sep 45K05  \sep 76D07 \sep  65M60 
	
\end{keyword}
	
\end{frontmatter}

\section{Introduction}

The prediction and monitoring of groundwater, the development of oil fields, and their optimization are impossible without a quantitative description of filtration processes \cite{bear2013dynamics, anderson2015applied}.  
Geological media are heterogeneous across many scales.  
Consequently, direct numerical simulation of individual pores at the spatial scales of interest remains unattainable even with modern high-performance computing systems. It is necessary to have physically sound and computationally efficient macroscopic models that retain key features of the flows at the microscale, such as retardation and dispersion.

At the pore scale, the fluid is described by the classical Navier–Stokes equations.  
For slow flows, inertial terms are small, and the system simplifies to the linear Stokes equations for incompressible flows.  
When averaging over a volume containing many pores, the microscopic velocity field is regarded as the filtration velocity, and the pressure gradient as the hydraulic gradient.  
Such a mathematical model of flows at the macrolevel is known as Darcy's law \cite{bear2010modeling, allen2021mathematics}.

Rigorous theoretical models for constructing macroscale models are based on homogenization theory \cite{sanchez1980non, auriault2010homogenization}.  
Under the assumption of a periodic microstructure, the method of two-scale asymptotic expansions is applied.  
A small dimensionless parameter $\varepsilon$ is introduced, defined as the ratio of the characteristic pore size to the size of the entire domain, assuming periodicity of the pore space geometry.  
The asymptotic expansion of the velocity and pressure fields in powers of $\varepsilon$ leads to boundary value problems solved on a periodicity cell.  
Averaging over the periodicity cell yields a symmetric positive definite absolute permeability tensor relating the filtration velocity and the pressure gradient via Darcy's law \cite{hornung1997homogenization, mikelic2007homogenization}.  
This approach ensures rigorous mathematical correctness and allows one to systematically account for the influence of microstructure on macroscopic properties without empirical corrections.

When the characteristic time of viscous diffusion in the pores is comparable to the observation time, the macroscopic response becomes inertial and depends on the history of the process.  
Mathematical studies of homogenization of unsteady flows in porous media began with the work \cite{lions1981some}.  
Systematic two-scale analysis leads to an integro-differential Darcy law, in which the filtration velocity at the current time is determined by the convolution of the pressure gradient with a tensor memory kernel \cite{allaire1992homogenization, mikelic1994mathematical, sandrakov1997homogenization}.  
The components of the tensor memory kernel are determined by averaging over the pore volume of the solution of the corresponding unsteady Stokes boundary value problem on the cell.

In the computational homogenization of unsteady filtration problems, we must address two main issues.  
The first concerns the calculation of the memory kernel and poses no major computational difficulties: it is necessary to solve unsteady problems on the periodicity cell and average the solution. For this, we can use, for example, standard finite element or finite volume computational technologies \cite{KnabnerAngermann2003, QuarteroniValli1994}.

The second issue, which deserves more attention, is related to the approximate solution of boundary value problems for integro-differential equations.  
Computational algorithms must take into account the most important features of the problems under consideration, which arise from the fact that the kernels are difference kernels and decay exponentially in time. We can rely on the use of the finite element method for spatial approximation and implicit time approximations using appropriate quadrature formulas for the integral terms \cite{ChenBook1998}.

The computational complexity of solving problems with memory stems from the need to work with the approximate solution over many time steps.  
Accounting for the specifics of integro-differential equations with difference kernels is ensured, in particular, by using convolution quadratures \cite{lubich1988convolution, jin2023numerical}. This allows one to update the solution recursively with linear complexity in the number of steps, avoiding the need to store the solution at all previous time instants.

A more promising approach for computational practice is related to using special approximations of the memory kernel \cite{linz1985analytical}.  
In this regard, we note the possibility of approximately solving problems for integro-differential equations by approximating the memory kernel as a finite sum of exponentials.  
In this case, a transformation from a nonlocal problem to a local one is achieved, where memory effects are accounted for by an additional system of ordinary differential equations.  
Such a computational technology in various forms is explored, in particular, in works \cite{vabishchevich2022numerical, vabishchevich2023approximate}.

In light of the above, the computational implementation of homogenization for unsteady flows in porous media is carried out in three stages: (i) solving a set of problems for unsteady equations in the periodicity cell to determine the components of the dynamic permeability tensor; (ii) approximating the components of the tensor memory kernel as a sum of exponentials; (iii) solving the local problem at the macrolevel for fluid filtration taking into account memory effects.  
The original equation is augmented by a system of linear ordinary differential equations, which does not introduce any fundamental complications. Consequently, we can construct computational algorithms for solving filtration problems with memory by modifying the methods used for standard filtration problems.

The approximate solution of the problems at stage (i) can be carried out directly based on the unsteady Stokes equations \cite{roychowdhury, glowinski2022numerical}. Also worth noting is a second option that can be used for the approximate solution of the integro-differential equation. In this case, stationary Stokes problems are solved for the Laplace transform with a complex parameter, after which the inverse Laplace transform in time is applied to the tensor memory kernel.

In implementing stage (ii), various computational algorithms for function approximation can be used \cite{meinardus2012approximation, trefethen2019approximation}.  
When approximating by a sum of exponentials, we note two possibilities. The first is related to approximating the kernel itself.  
In this case, we can rely on the Prony algorithm and its generalizations \cite{pereyra2010exponential}.  
We also highlight algorithms for nonlinear approximations taking into account the sign-definiteness of the coefficients of exponential approximations based on the non-negative least squares (NNLS) algorithm \cite{holmstrom2002review,vabishchevich2026computational}.  
The second possibility is related to rational approximations of the image of the function when applying the Laplace transform.  
The problem of nonlinear approximations by rational functions is better developed \cite{braess1986nonlinear}. Currently, simple and stable Adaptive Antoulas–Anderson algorithms are widely used \cite{nakatsukasa2025applications}.

In computational homogenization of unsteady problems, stages (i) and (ii) can be replaced, as was done in \cite{vab2025hom} for diffusion problems.  
Instead of solving unsteady problems on the periodicity cell, averaging the solution over the cell, and approximating the memory kernels by a sum of exponentials, a partial eigenvalue and eigenfunction problem is solved, extracting the smallest positive eigenvalues. This approach seems promising for modern computational practice in homogenization of unsteady processes. The aim of this study is to apply such a developed computational homogenization technology to problems of unsteady filtration in porous media.

The organization of our work is as follows.  
Section 2 formulates the homogenization problem for describing unsteady flows of a viscous incompressible fluid in porous media with a periodic structure. Memory effects are described by a nonlocal Darcy law with dynamic permeability.  
In Section 3, the nonlocal problem is transformed into a local one based on approximating the components of the tensor memory kernel as a sum of exponentials by solving spectral problems on periodicity cells.  
Section 4 discusses issues of the computational implementation of the proposed approach based on finite element spatial approximations. Two-level time approximations are constructed and analyzed for stability for the approximate solution of the macroscale problem.  
Section 5 presents the key elements of the proposed algorithm for averaging unsteady filtration processes using a two-dimensional test problem as an example.  
Section 6 summarizes the findings of our study.

\section{Problem statement}\label{sec:2}

\begin{figure}[ht]
	\center
	\includegraphics[width=1\linewidth]{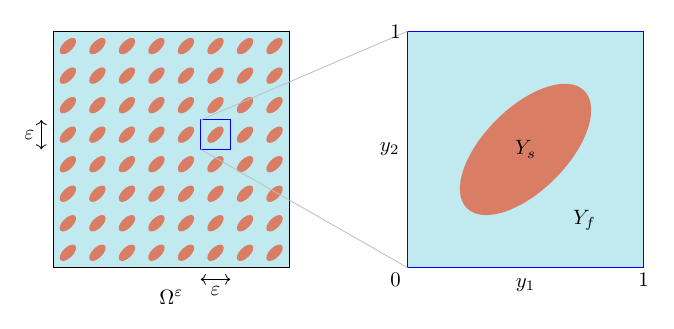}
	\caption{Periodic porous medium and periodicity cell}
	\label{f-1}
\end{figure}

We consider a periodic porous medium with a characteristic heterogeneity scale $\varepsilon \ll 1$ in the domain $\Omega \subset \mathbb{R}^d$ ($d=2,3$). The pore space itself is assumed to be connected. The flow is modeled in the part $\Omega^\varepsilon$ of the domain occupied by pores.
In homogenization theory, a fast variable $\bm{y} = \bm{x}/\varepsilon \in Y = (0,1)^d$ is introduced, where $Y$ is the elementary cell of the periodic structure. The cell consists of two subdomains: $Y = Y_f \cup Y_s \cup \Gamma$, where $Y_f$ is the fluid part, $Y_s$ is the solid part, and $\Gamma = \partial Y_f \cap \partial Y_s$ (Fig.~\ref{f-1}).

Slow unsteady flows of an incompressible fluid in the pore space are described by the unsteady Stokes system for the velocity $\bm u^\varepsilon(\bm x,t)$ and pressure $p^\varepsilon(\bm x,t)$, which after appropriate nondimensionalization takes the form:
\begin{equation}\label{2.1}
	\frac{\partial \bm u^\varepsilon}{\partial t} 
    - \varepsilon^2 \Delta\bm u^\varepsilon +\nabla p^\varepsilon =\bm  f(\bm  x, t), 
\end{equation}
\begin{equation}\label{2.2}
	\nabla\cdot \bm u^\varepsilon = 0, \quad \bm x \in \Omega^\varepsilon, \ t > 0,
\end{equation}
where $\bm f(\bm x, t)$ is a given body force.
We assume that the boundary and initial conditions are given by
\begin{equation}\label{2.3}
	\bm u^\varepsilon(\bm x, t) = 0, \quad \bm x \in \partial \Omega^\varepsilon, \ t > 0.
\end{equation}
\begin{equation}\label{2.4}
	\bm u^\varepsilon(\bm x, 0) = 0, \quad \bm x \in \Omega^\varepsilon.
\end{equation}

At the macroscale, homogenization theory \cite{allaire1992homogenization, mikelic1994mathematical, sandrakov1997homogenization} yields Darcy's law with memory:
\begin{equation}\label{2.5}
	\bm u(\bm x, t) = -\int_0^t K(t-s)\bigl(\nabla p(\bm x,s) - \bm f(\bm x, s)\bigr) \,ds .
\end{equation}
For the incompressible media under consideration, the continuity equation is employed:
\begin{equation}\label{2.6}
	\nabla\cdot \bm u(\bm x, t) = 0, \quad \bm x \in \Omega, \ t > 0.
\end{equation}
The components of the dynamic permeability tensor $K\colon\mathbb R^{d\times d}\to\mathbb R^{d\times d}$ are computed by solving unsteady problems on the periodicity cell.

When homogenizing the unsteady problems \eqref{2.1}--\eqref{2.4}, we have two similar possibilities \cite{wang2025corrector} for the cell problems.
In the first case \cite{allaire1992homogenization}, we consider problems with a constant right-hand side.
The second option \cite{mikelic1994mathematical}, which is used in our work, is associated with problems with a constant initial condition.

The dynamic permeability tensor $K = (K_{ij})$ is computed by solving the auxiliary problems $\bm w_j(\bm y,t),$ $\pi_j(\bm y,t), \ j =1,2, \ldots, d$ on the cell:
\begin{equation}\label{2.7}
  \frac{\partial \bm w_j}{\partial t} 
  - \Delta_{\bm y}\bm w_j+\nabla_{\bm y}\pi_j = 0,
  \quad \bm y \in Y_f, \ t > 0,
\end{equation}
\begin{equation}\label{2.8}
  \nabla_{\bm y}\cdot \bm w_j = 0, \quad \bm y \in Y_f, \ t > 0,
\end{equation}
\begin{equation}\label{2.9}
  \bm w_j(\bm y, t) = 0, \quad \bm y \in \Gamma, \ t > 0,
\end{equation}
\begin{equation}\label{2.10}
  \bm w_j(\bm y, 0) = \bm e_j, \quad \bm y \in Y_f, \ t > 0,
\end{equation}
where $\bm{e}_j$ are the unit vectors of the Cartesian coordinate system. The components of the permeability tensor are determined from the solutions of problems \eqref{2.7}--\eqref{2.10} by the formulas:
\begin{equation}\label{2.11}
 K_{ij}(t)=\int_{Y_f}\bm w_j(\bm y, t) \cdot \bm e_i \,d\bm y ,
 \quad i, j = 1,2, \ldots, d.
\end{equation}
The fundamental properties of the permeability tensor hold: symmetry, positive definiteness, and exponential decay in time.

It is necessary to develop a computational algorithm for solving the macroscale problem \eqref{2.5}, \eqref{2.6} based on computing the dynamic permeability tensor according to \eqref{2.11} from the solutions of the periodicity cell problems \eqref{2.7}--\eqref{2.10}.

\section{Transformation of the problem} \label{sec:3}

The main difficulties in the numerical solution of the considered computational homogenization problem are related to the necessity of solving a problem nonlocal in time. The solution for the pressure, determined from the integro-differential equations \eqref{2.5} and \eqref{2.6}, depends on its entire history at each new time level.  
Consequently, the natural approach (see, e.g., \cite{ChenBook1998}), based on applying suitable quadrature formulas, leads to an unacceptable increase in computational cost both in time and memory requirements.
An additional difficulty is that the tensor kernel $K(t)$ is not given explicitly but is determined according to \eqref{2.7}--\eqref{2.11} from the solutions of auxiliary unsteady problems on the periodicity cell.

Following \cite{vabishchevich2022numerical,vabishchevich2023approximate}, we focus on computational algorithms that account for memory effects at a lower cost.  
To this end, for the approximate solution we transition from a nonlocal problem to a local one by introducing additional unknowns and extending the original system of equations.

Similarly to \cite{vab2025hom}, we associate the following auxiliary spectral problems with the periodicity cell problems \eqref{2.7}--\eqref{2.10}:
\begin{equation}\label{3.1}
  - \Delta_{\bm y}\bm \varphi+\nabla_{\bm y}\eta = \lambda \bm \varphi, \quad \bm y \in Y_f, 
\end{equation}
\begin{equation}\label{3.2}
  \nabla_{\bm y}\cdot \bm \varphi = 0, \quad \bm y \in Y_f,
\end{equation}
\begin{equation}\label{3.3}
  \bm \varphi(\bm y) = 0, \quad \bm y \in \Gamma,
\end{equation}
with additional periodicity conditions.
The eigenvalues and eigenfunctions $\lambda_k, \bm \varphi_k(y)$, $k = 1,2, \ldots$, of problems \eqref{3.1}--\eqref{3.3} are real and satisfy the inequalities \cite{galdi2011introduction}:
\begin{equation}\label{3.4}
	0 < \lambda_1 \leq \lambda_2 \leq \ldots \, .
\end{equation}
For the solution of problem \eqref{2.7}--\eqref{2.10}, we obtain
\begin{equation}\label{3.5}
	\bm w_j(y,t) = \sum_{k=1}^{\infty} (\bm e_j, \bm \varphi_k) \exp(-\lambda_k t) \bm \varphi_k(y), 
	\quad \bm y \in Y_f,\ t > 0
    \quad j = 1,2, \ldots, d.
\end{equation}
Here $(\cdot, \cdot)$ for vector functions denotes the inner product in the space $\bm L_2(Y_f)$, so that
\[
(\bm e_j, \bm \varphi_k) = \int_{Y_f} \bm e_j \cdot \bm \varphi_k(y) \, dy, 
\quad k = 1,2, \ldots,
\quad j = 1,2, \ldots, d.
\]

Taking into account \eqref{2.11} and \eqref{3.5}, for the components of the permeability tensor we have
\begin{equation}\label{3.6}
 K_{ij}(t)= \sum_{k=1}^{\infty} a_i^k a_j^k \exp(-\lambda_k t) ,
\end{equation}
where
\[
 a_i^k = (\bm e_i, \bm \varphi_k),
 \quad k = 1,2, \ldots,
 \quad i = 1,2, \ldots, d .
\]
Thus, the components of the permeability tensor are expressed as a sum of exponentials, and the tensor itself is symmetric and positive semidefinite:
\[
  K_{ij}(t) = K_{ji}(t),
  \quad i, j = 1,2, \ldots, d,
  \quad K(t) \ge 0 .
\]
Positive definiteness should be established by a more detailed study.

Without accounting for memory effects, the permeability tensor $\accentset{-}{K}_{ij}$ is determined by solving boundary value problems on the periodicity cell:
\begin{equation}\label{3.7}
  - \Delta_{\bm y}\bm w_j^0+\nabla_{\bm y}\pi_j = \bm e_j,
  \quad \bm y \in Y_f,
\end{equation}
\begin{equation}\label{3.8}
  \nabla_{\bm y}\cdot \bm w_j^0 = 0, \quad \bm y \in Y_f, 
\end{equation}
with periodicity conditions and
\begin{equation}\label{3.9}
  \bm w_j^0(\bm y) = 0, \quad \bm y \in \Gamma ,
  \quad j = 1,2, \ldots, d .
\end{equation}
The constant permeability tensor is found analogously to \eqref{2.11}:
\begin{equation}\label{3.10}
 \accentset{-}{K}_{ij} =\int_{Y_f}\bm w_j^0(\bm y) \cdot \bm e_i \,d\bm y ,
 \quad i, j = 1,2, \ldots, d.
\end{equation}
Using the solution of the spectral problem \eqref{3.1}--\eqref{3.2} and \eqref{3.6}, a connection is established between the permeability tensor $\accentset{-}{K}_{ij}$ and the tensor memory kernel $K_{ij}(t)$:
\begin{equation}\label{3.11}
 \accentset{-}{K}_{ij} = \int_0^\infty K_{ij}(t) \, d t = \sum_{k=1}^{\infty} \frac {a_i^k a_j^k} {\lambda_k} ,
\quad i, j = 1,2, \ldots, d .
\end{equation}
Taking into account the positive definiteness of $\accentset{-}{K}_{ij}$, it follows from \eqref{3.6} that the tensor memory kernel $K_{ij}(t)$ is positive definite for $t \ge 0.$

Our computational homogenization algorithm is based on representing the tensor memory kernel as a sum of exponentials \eqref{3.6}. In this case, we have the possibility of transitioning from a nonlocal problem to a local one.
A natural approach involves approximating the elements of the tensor by a finite sum, retaining only the first $m$ eigenvalues.
In representation \eqref{3.6}, we separate two terms:
\[
 K_{ij}(t)=  K_{ij}^m(t) + \delta K_{ij}^m(t),
\]
where
\[
K_{ij}^m(t)  = \sum_{k=1}^{m} a_i^k a_j^k \exp(-\lambda_k t), 
\quad \delta K_{ij}^m(t)  = \sum_{k=m+1}^{\infty} a_i^k a_j^k \exp(-\lambda_k t), \quad i, j = 1,2, \ldots, d .
\]
The influence of the terms $\delta K_{ij}^m(t), \ i, j = 1,2, \ldots, d$ is taken into account approximately. 
For sufficiently large $m$, the eigenvalues $\lambda_k$, $k = m+1, m+2, \ldots$ are large. The function
\[
s(t) = 
\begin{cases}
\lambda \exp(-\lambda t), & t \geq 0, \\
0, & t < 0
\end{cases}
\]
tends to the delta function $\delta(t)$ as $\lambda \rightarrow \infty$. Taking this into account, we obtain
\[
\delta K_{ij}^m(t) \approx \widetilde{ K}_{ij}^m \delta(t),
\quad \widetilde{ K}_{ij}^m = 
\sum_{k=m+1}^{\infty} \frac {a_i^k a_j^k} {\lambda_k} ,
\quad i, j = 1,2, \ldots, d .
\]
Considering \eqref{3.11}, we have the approximate equality
\begin{equation}\label{3.12}
 K_{ij}(t) \approx  K_{ij}^m(t) + \widetilde{ K}_{ij}^m \delta(t),
\end{equation}
in which
\begin{equation}\label{3.13}
\widetilde{ K}_{ij}^m = \accentset{-}{K}_{ij} - \sum_{k=1}^{m} \frac {a_i^k a_j^k} {\lambda_k} ,
\quad i, j = 1,2, \ldots, d .
\end{equation}
This makes it possible to calculate the components of the tensor $\widetilde{ K}_{ij}^m$ from the first $m$ eigenvalues and eigenfunctions $\lambda_k, \bm \varphi_k(y)$ when determining $\accentset{-}{K}_{ij}$ from \eqref{3.7}--\eqref{3.10}.

The features of the macroscale problems are demonstrated for a constant body force $\bm f(\bm x)$. Considering the general case $\bm f(\bm x,t)$ does not cause fundamental complications. Taking into account \eqref{2.5}, \eqref{2.6} and \eqref{3.12}, the corresponding approximate solution $\widetilde{p}(x,t)$ is determined from the equation
\begin{equation}\label{3.14}
- \nabla \cdot \left ( \widetilde{ K}^m \nabla \widetilde{p} \right )
- \nabla \cdot \int_0^t K^m (t-s) \nabla \widetilde{p} (\bm x, s) d s = \psi(\bm x,t),
\quad \bm x \in \Omega,\ t > 0 .
\end{equation}
The right-hand side, given our assumptions, has the form
\begin{equation}\label{3.15}
\psi(\bm x,t) = - \nabla \cdot \bm g,
\quad 
\bm g(\bm x,t) = \accentset{-}{K} \bm f(\bm x)
- \varPhi (t) \bm f(\bm x) ,
\end{equation}
where the unsteady part is determined by the tensor $\varPhi(t)$ with components
\[
\varPhi_{ij}^m(t)  = \sum_{k=1}^{m} \frac{a_i^k a_j^k} {\lambda_k} \exp(-\lambda_k t), 
\quad i, j = 1,2, \ldots, d .
\]
Taking into account \eqref{2.3}, equation \eqref{3.14} is supplemented with the boundary condition
\begin{equation}\label{3.16}
\left ( \widetilde{ K}^m \nabla \widetilde{p}
+ \int_0^t K^m (t-s) \nabla \widetilde{p} (\bm x, s) d s  \right ) \bm \nu =  g_\nu (\bm x, t) ,
\quad \bm x \in \partial \Omega,\ t > 0 ,
\end{equation}
in which
\[
 g_\nu (\bm x, t) = \bm g(\bm x, t) \cdot \bm \nu ,
\]
where $\bm \nu$ is the outward normal to $\Omega$. The solution is sought taking into account that the pressure is defined up to a constant.

To formulate the local problem, similarly to \cite{vabishchevich2022numerical, vabishchevich2023approximate}, we introduce $m$ auxiliary functions:
\begin{equation}\label{3.17}
c_k(x,t) = \int_0^t \exp(-\lambda_k (t-s)) \widetilde{p}(\bm x,s) ds,
\quad k=1,2,\ldots,m.
\end{equation}
Using these functions, equation \eqref{3.14} can be rewritten as
\begin{equation}\label{3.18}
- \nabla \cdot \left ( \widetilde{ K}^m \nabla \widetilde{p} \right )
- \sum_{k=1}^m \nabla \cdot \left ( D^k \nabla c_k \right ) = \psi(\bm x,t),
\quad \bm x \in \Omega,\ t > 0 ,
\end{equation}
where for the elements of the tensors $D^k, \ k=1,2,\ldots,m$ we have
\[
 D^k_{ij} = a_i^k a_j^k ,
 \quad i, j = 1,2, \ldots, d .
\]
The boundary condition \eqref{3.16} yields
\begin{equation}\label{3.19}
\left ( \widetilde{ K}^m \nabla \widetilde{p}
+ \sum_{k=1}^m D^k \nabla c_k \right ) \bm \nu =   g_\nu (\bm x, t),
\quad \bm x \in \partial \Omega,\ t > 0 .
\end{equation}
The introduced auxiliary functions $c_k(x,t)$, $k = 1,2,\ldots,m$, are determined from the system of ordinary differential equations:
\begin{equation}\label{3.20}
\frac{\partial c_k}{\partial t} + \lambda_k c_k - \widetilde{p} = 0,
\quad k = 1,2,\ldots,m.
\end{equation}
Taking into account \eqref{3.17}, the initial conditions are formulated as follows:
\begin{equation}\label{3.21}
c_k(x,0) = 0,
\quad x \in \Omega,
\quad k = 1,2,\ldots,m.
\end{equation}

The boundary value problem \eqref{3.18}--\eqref{3.21} is considered under the natural assumption of positive definiteness of the tensor $\widetilde{ K}^m$. We present an a priori estimate for the solution, which ensures the well-posedness of our problem.

\begin{thm}\label{t-1}
For the solution of problem \eqref{3.18}--\eqref{3.21} with $\widetilde{ K}^m > 0$, the following a priori estimate holds:
\begin{equation}\label{3.22}
 \int_{0}^{T} \left ( \widetilde{ K}^m \nabla \widetilde{p}, \nabla \widetilde{p} \right ) d t +
 \sum_{k=1}^m \left (D^k \nabla c_k , \nabla c_k \right ) 
 \leq \int_{0}^{T} \left ( \left (\widetilde{ K}^m \right )^{-1}   
 \bm g, \bm g \right ) d t ,
	\quad t > 0.
\end{equation}
\end{thm}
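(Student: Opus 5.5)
Write the problem \eqref{3.18}, \eqref{3.19} in variational form: multiply \eqref{3.18} by a test function $q$, integrate over $\Omega$ and integrate by parts. With the Neumann condition \eqref{3.19} and $\psi = -\nabla\cdot\bm g$, the boundary terms cancel, and for all $q$ we get
\[
\left(\widetilde{K}^m \nabla \widetilde{p}, \nabla q\right) + \sum_{k=1}^m \left(D^k \nabla c_k, \nabla q\right) = (\bm g, \nabla q).
\]
Take $q = \widetilde{p}$. The key observation is that \eqref{3.20} gives $\widetilde{p} = \partial c_k/\partial t + \lambda_k c_k$ for every $k$. We substitute this expression for $\widetilde{p}$ only in the $k$-th memory term, with $\nabla\widetilde p$ replaced accordingly. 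Since each $D^k = \bm a^k (\bm a^k)^T$ is symmetric and constant, the term becomes
\[
\left(D^k \nabla c_k, \nabla \widetilde{p}\right) = \frac{1}{2}\frac{d}{dt}\left(D^k \nabla c_k, \nabla c_k\right) + \lambda_k \left(D^k \nabla c_k, \nabla c_k\right),
\]
and the last summand is nonnegative because $D^k \ge 0$ and $\lambda_k > 0$.

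For the right-hand side, apply Cauchy--Schwarz in the $\widetilde{K}^m$-weighted inner product, which is legitimate since $\widetilde{K}^m > 0$, followed by Young's inequality:
\[
(\bm g, \nabla \widetilde{p}) \le \frac12 \left(\widetilde{K}^m \nabla\widetilde p, \nabla \widetilde p\right) + \frac12\left(\left(\widetilde{K}^m\right)^{-1}\bm g, \bm g\right).
\]
Absorb the first term into the left-hand side and drop the nonnegative $\lambda_k$ terms. This yields
\[
\frac12\left(\widetilde{K}^m \nabla\widetilde p, \nabla \widetilde p\right) + \frac12 \frac{d}{dt}\sum_k \left(D^k \nabla c_k, \nabla c_k\right) \le \frac12 \left(\left(\widetilde{K}^m\right)^{-1}\bm g, \bm g\right).
\]
Multiply by $2$ and integrate over $(0,T)$, using the zero initial data \eqref{3.21}. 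The result is exactly \eqref{3.22}, with the memory terms evaluated at $t=T$.

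I expect the main obstacle to be bookkeeping rather than analysis. First, we must justify that $\nabla \widetilde{p}$ can be replaced by $\nabla(\partial_t c_k + \lambda_k c_k)$ inside each $k$-th term; this follows from \eqref{3.20} holding pointwise and being differentiated in space. Second, the test function must be admissible: since the pressure is defined only up to a constant and only gradients appear, we work in $H^1(\Omega)/\mathbb{R}$. Third, the Young constant must be chosen as $1/2$ so that the final constants match the statement exactly.
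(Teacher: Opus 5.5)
Your proof is correct and is essentially the paper's energy argument. You test with $\widetilde{p}$, use \eqref{3.20} to turn each memory term into $\frac12\frac{d}{dt}(D^k\nabla c_k,\nabla c_k)$ plus a nonnegative remainder, then apply the $\widetilde{K}^m$-weighted Young inequality and integrate. The paper only organizes the bookkeeping differently: it writes $c_k = (\widetilde{p} - \partial c_k/\partial t)/\lambda_k$ and adds the auxiliary identity \eqref{3.26}. Its discarded term is therefore $(\widetilde{D}^k \nabla(\widetilde{p} - \partial c_k/\partial t), \nabla(\widetilde{p} - \partial c_k/\partial t))$, which equals your $\lambda_k (D^k\nabla c_k, \nabla c_k)$.
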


\begin{proof}
From \eqref{3.20} we have
\begin{equation}\label{3.23}
c_k = \frac{1}{\lambda_k} \left (\widetilde{p} - \frac{\partial c_k}{\partial t} \right ) ,
\quad k = 1,2,\ldots,m.
\end{equation}
Substitution into \eqref{3.18} taking into account \eqref{3.13} leads to the equation
\begin{equation}\label{3.24}
- \nabla \cdot \left ( \widetilde{ K}^m \nabla \widetilde{p} \right )
- \sum_{k=1}^m \nabla \cdot \left ( \widetilde{D}^k \nabla \left (\widetilde{p} - \frac{\partial c_k}{\partial t} \right ) \right ) = \psi(\bm x,t),
\quad \bm x \in \Omega,\ t > 0 ,
\end{equation}
where for the elements of the tensors $\widetilde{D}^k, \ k=1,2,\ldots,m$ we have
\[
 \widetilde{D}^k_{ij} = \frac{a_i^k a_j^k}{\lambda_k} ,
 \quad i, j = 1,2, \ldots, d .
\]
Multiply equation \eqref{3.24} scalarly by $\widetilde{p}.$ Taking into account the boundary condition \eqref{3.19}, we obtain
\begin{equation}\label{3.25}
\left ( \widetilde{ K}^m \nabla \widetilde{p}, \nabla \widetilde{p} \right )
+ \sum_{k=1}^m \left ( \widetilde{D}^k \nabla \left (\widetilde{p} - \frac{\partial c_k}{\partial t} \right ), \nabla \widetilde{p} \right ) = (\bm g, \nabla \widetilde{p}) .
\end{equation}
From \eqref{3.23} it follows that
\[
D^k \nabla c_k - \widetilde{D}^k \nabla \left (\widetilde{p} - \frac{\partial c_k}{\partial t} \right ) = 0,
\quad k = 1,2,\ldots,m.
\]
Multiplying each equation scalarly by $\displaystyle{\nabla \frac{\partial c_k}{\partial t} }$ and summing them yields:
\begin{equation}\label{3.26}
 \sum_{k=1}^m \left (D^k \nabla c_k , \nabla \frac{\partial c_k}{\partial t} \right ) -
\sum_{k=1}^m \left ( \widetilde{D}^k \nabla \left (\widetilde{p} - \frac{\partial c_k}{\partial t} \right ), \nabla \frac{\partial c_k}{\partial t}  \right ) = 0.
\end{equation}
From \eqref{3.25} and \eqref{3.26} the inequality follows:
\begin{equation}\label{3.27}
\left ( \widetilde{ K}^m \nabla \widetilde{p}, \nabla \widetilde{p} \right )
+ \frac 12 \frac{d} {d t} \sum_{k=1}^m \left (D^k \nabla c_k , \nabla c_k \right ) \leq (\bm g, \nabla \widetilde{p}) .
\end{equation}
Taking into account the inequality
\[
 (\bm g, \nabla \widetilde{p}) \leq \frac 12 \left ( \widetilde{ K}^m \nabla \widetilde{p}, \nabla \widetilde{p} \right )
 + \frac 12 \left ( \left (\widetilde{ K}^m \right )^{-1}  \bm g, \bm g \right ) ,
\]
from \eqref{3.27} we obtain
\[
\left ( \widetilde{ K}^m \nabla \widetilde{p}, \nabla \widetilde{p} \right )
+ \frac{d} {d t} \sum_{k=1}^m \left (D^k \nabla c_k , \nabla c_k \right ) \leq \left ( \left (\widetilde{ K}^m \right )^{-1}  \bm g, \bm g \right ) .
\]
Integrating with respect to $t$ from $0$ to $T$ and taking into account the initial conditions \eqref{3.21}, we arrive at the desired estimate \eqref{3.22}.
\end{proof}

\section{Computational algorithm} \label{sec:4}

We single out the individual problems that are solved in the considered computational homogenization. There are three such problems:

\begin{enumerate}
\item Boundary value problems \eqref{3.7}--\eqref{3.9} on the periodicity cell $Y_f$ for determining the steady-state permeability tensor according to \eqref{3.10}.
\item Solving the partial eigenvalue problem \eqref{3.1}--\eqref{3.3} $(\lambda_k, \bm \varphi_k(\bm y)$, $k = 1,2,\ldots,m)$ in the domain $Y_f$ and calculating the components of the steady-state tensor $\widetilde{ K}^m$ according to \eqref{3.13} and the unsteady tensor $K^m(t)$ in representation \eqref{3.12}.
\item Solving problem (\ref{3.18})--(\ref{3.21}) to find the homogenized solution in the computational domain $\Omega$.
\end{enumerate}

The numerical solution of the steady-state Stokes problems \eqref{3.7}--\eqref{3.9} is performed using finite element approximations on triangular (tetrahedral) meshes. In computational practice for modeling incompressible fluid flows, the Hood–Taylor finite elements are most widely used \cite{roychowdhury,glowinski2022numerical}, where the velocity is approximated using second-degree polynomials, and the pressure using first-degree polynomials.

For the approximate computation of the memory kernels (components of the conductivity tensor), we first solve the partial spectral problem \eqref{3.1}--\eqref{3.3} in the periodicity cell.
The features of the considered spectral problems \cite{Babuska1991} when using finite elements are taken into account, first of all, in the construction of the discrete problem (mixed eigenvalue problems) \cite{boffi2013mixed}. In the numerical solution of the corresponding algebraic problem (saddle point problems), well-developed computational algorithms \cite{saad2011numerical} are used to find the first minimal (in absolute value) eigenvalues and eigenfunctions of generalized eigenvalue problems.

We pay the greatest attention to the computational algorithm for the approximate solution of the macroscale problem \eqref{3.18}--\eqref{3.21}. In the computational domain $\Omega$, a triangular (tetrahedral for three-dimensional problems) mesh is introduced. When approximating with linear finite elements on this mesh, we define the space of functions $V^h \subset H^1(\Omega)$. We seek an approximate solution of problem \eqref{3.18}--\eqref{3.21} $v(\bm x,t) \in V^h, \ v_k(\bm x,t) \in V^h, \ k = 1,2,\ldots,m$, corresponding to $\widetilde{p}(x,t)$ and $c_k(\bm x,t), \ k = 1,2,\ldots,m$.
The variational problem consists of finding $v(\bm x,t) \in V^h$ and $v_k(\bm x,t) \in V^h, \ k = 1,2,\ldots,m$, satisfying:
\begin{equation}\label{4.1}
 \left ( \widetilde{ K}^m \nabla v, \nabla z \right )
 + \sum_{k=1}^m \left ( D^k \nabla v_k, \nabla z \right ) 
 = (\bm g, \nabla z) ,
\end{equation} 
\begin{equation}\label{4.2}
	\left(\frac{\partial v_k}{\partial t}, z_k\right) + \lambda_k (v_k, z_k) - (v, z_k) = 0,
	\quad k = 1,2,\ldots,m, 
	\quad t > 0,
\end{equation} 
for all $z(\bm x), \ z_k(\bm x), \ k = 1,2,\ldots,m \in V^h.$ 
Here $(\cdot, \cdot)$ denotes the inner product in $L_2(\Omega)$.
In \eqref{4.1}, it is taken into account that the boundary conditions \eqref{3.19} are natural for equation \eqref{3.18}.
The initial conditions \eqref{3.21} yield
\begin{equation}\label{4.3}
	(v_k(x,0), z_k(x)) = 0,
	\quad k = 1,2,\ldots,m.
\end{equation}
Establishing an a priori stability estimate for the solution of problem \eqref{4.1}--\eqref{4.3} of the type \eqref{3.22} is difficult. This is because the finite element solution does not have the necessary smoothness to carry out an analysis similar to the proof of Theorem~\ref{t-1}.
We note that such problems do not arise when using finite volume spatial approximations.

One can use a more complex variant instead of \eqref{4.2}.
We introduce functions $r_k(\bm x,t)$ by the expression 
\[
 r_k = \frac{\partial c_k}{\partial t} + \lambda_k c_k - \widetilde{p} ,
 \quad k = 1,2,\ldots,m.
\]
We will determine them not as solutions of equations (cf. \eqref{3.20})
\[
 r_k(\bm x,t) = 0,
 \quad k = 1,2,\ldots,m ,
\]
but as solutions of boundary value problems:
\begin{equation}\label{4.4}
- \nabla \cdot \left ( \widetilde{D}^k \nabla r_k \right ) = 0,
\quad \bm x \in \Omega,
\end{equation}
\begin{equation}\label{4.5}
\left ( \widetilde{D}^k \nabla r_k \right ) \cdot \bm \nu = 0,
\quad \bm x \in \partial \Omega,
	\quad
\end{equation}
for functions $(r_k, 1) = 0, \  k = 1,2,\ldots,m.$ 
With this approach, instead of \eqref{4.2}, the finite element solution is determined from
\begin{equation}\label{4.6}
	\left(\widetilde{D}^k \nabla \frac{\partial v_k}{\partial t}, \nabla z_k\right ) + \lambda_k \left (\widetilde{D}^k \nabla v_k, \nabla z_k \right ) - \left (\widetilde{D}^k \nabla v, \nabla z_k \right ) = 0,
	\quad k = 1,2,\ldots,m, 
	\quad t > 0,
\end{equation}  
taking into account the boundary condition \eqref{4.5}.

For time approximation, for simplicity, we will use a uniform time grid with step $\tau$: $t_n = n\tau$, $n = 0,1,\ldots$. The approximate solution of problem \eqref{4.1}--\eqref{4.3} at time $t_n$ is denoted by $v^n$, $v_k^n$, $k = 1,2,\ldots,m$.
We will use standard two-level schemes with a weight parameter $\sigma$ \cite{SamarskiiTheory,SamarskiiMatusVabischevich2002}:
\begin{equation}\label{4.7}
 \left ( \widetilde{ K}^m \nabla v^{n+\sigma}, \nabla z \right )
 + \sum_{k=1}^m \left ( D^k \nabla v_k^{n+\sigma}, \nabla z \right ) 
 = (\bm g^{n+\sigma}, \nabla z) ,
\end{equation} 
\begin{equation}\label{4.8}
\begin{split}
 \left(\widetilde{D}^k \nabla \frac{v_k^{n+1} - v_k^{n}}{\tau}, \nabla z_k\right)  & + \lambda_k \left (\widetilde{D}^k \nabla v_k^{n+\sigma}, \nabla z_k \right ) - \left (\widetilde{D}^k \nabla v^{n+\sigma}, \nabla z_k \right ) = 0 , 
 \quad k  = 1,2,\ldots,m, 
\end{split}
\end{equation} 
using the notation
\[
	v^{n+\sigma} = \sigma v^{n+1} + (1-\sigma) v^{n} ,
    \quad n = 0,1,\ldots.
\]
The initial state is determined from
\begin{equation}\label{4.9}
 \left ( \widetilde{ K}^m \nabla v^{0}, \nabla z \right )
 = (\bm g^{0}, \nabla z) ,
\end{equation}
\begin{equation}\label{4.10}
	(v_k^0, z_k) = 0,
	\quad k = 1,2,\ldots,m .
\end{equation}
For $\sigma = 1/2$, the difference scheme \eqref{4.7}--\eqref{4.9} has second-order approximation in $\tau$, and for $\sigma \neq 1/2$, first-order approximation. Stability is established by analogy with Theorem~\ref{t-1}.

\begin{thm}\label{t-2}
The difference scheme \eqref{4.7}--\eqref{4.10} is unconditionally stable for $\widetilde{ K}^m > 0$ if the weight parameter $\sigma \geq 1/2$. Under these conditions, the solution satisfies the a priori estimate ($T = N \tau$):
\begin{equation}\label{4.11}
\tau \sum_{n=0}^{N-1} \left ( \widetilde{ K}^m \nabla v^{n+\sigma}, \nabla v^{n+\sigma} \right )
+ \sum_{k=1}^m \left ( D^k \nabla v_k^{N}, \nabla v_k^{N} \right ) 
 \le \tau \sum_{n=0}^{N-1} \left ( \left ( \widetilde{ K}^m  \right )^{-1} \nabla \bm g^{n+\sigma}, \nabla \bm g^{n+\sigma} \right ) .
\end{equation} 
\end{thm}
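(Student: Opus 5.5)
We mimic the proof of Theorem~\ref{t-1} at the discrete level, with the test functions of the variational problem taking the place of the multiplications used there. We take $z = v^{n+\sigma}$ in \eqref{4.7} and $z_k = (v_k^{n+1}-v_k^n)/\tau$ in \eqref{4.8}. The first choice gives
\[
\left ( \widetilde{ K}^m \nabla v^{n+\sigma}, \nabla v^{n+\sigma} \right )
+ \sum_{k=1}^m \left ( D^k \nabla v_k^{n+\sigma}, \nabla v^{n+\sigma} \right )
= (\bm g^{n+\sigma}, \nabla v^{n+\sigma}) .
\]

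The cross term has to be converted into a discrete time derivative of $\sum_k (D^k\nabla v_k,\nabla v_k)$. Since $D^k = \lambda_k \widetilde{D}^k$, equation \eqref{4.8} reads
\[
\left(\widetilde{D}^k \nabla \tfrac{v_k^{n+1}-v_k^n}{\tau}, \nabla z_k\right) + \left(D^k \nabla v_k^{n+\sigma},\nabla z_k\right) = \left(\widetilde{D}^k \nabla v^{n+\sigma},\nabla z_k\right).
\]
This is the discrete analogue of \eqref{3.23}. We will use it twice, once tested with $z_k = v^{n+\sigma}$ and once with $z_k = (v_k^{n+1}-v_k^n)/\tau$, exactly as \eqref{3.25} and \eqref{3.26} were combined. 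The key identity is the standard one for weighted schemes:
\[
v_k^{n+\sigma} = \frac{v_k^{n+1}+v_k^n}{2} + \left(\sigma-\frac12\right)\tau\,\frac{v_k^{n+1}-v_k^n}{\tau}.
\]
Consequently,
\[
\left(D^k\nabla v_k^{n+\sigma},\nabla \tfrac{v_k^{n+1}-v_k^n}{\tau}\right) = \frac{1}{2\tau}\Bigl((D^k\nabla v_k^{n+1},\nabla v_k^{n+1})-(D^k\nabla v_k^{n},\nabla v_k^{n})\Bigr) + \left(\sigma-\frac12\right)\tau \left(D^k\nabla \tfrac{v_k^{n+1}-v_k^n}{\tau},\nabla \tfrac{v_k^{n+1}-v_k^n}{\tau}\right).
\]
The last term is nonnegative for $\sigma\ge 1/2$, because $D^k\ge 0$. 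Combining the tested equations, the mixed terms $\bigl(\widetilde{D}^k\nabla(v^{n+\sigma}-\tfrac{v_k^{n+1}-v_k^n}{\tau}),\cdot\bigr)$ assemble into $\bigl(\widetilde{D}^k\nabla w,\nabla w\bigr)\ge 0$ with $w = v^{n+\sigma}-(v_k^{n+1}-v_k^n)/\tau$, mirroring the step from \eqref{3.25}, \eqref{3.26} to \eqref{3.27}. After dropping these nonnegative terms we obtain
\[
\left ( \widetilde{ K}^m \nabla v^{n+\sigma}, \nabla v^{n+\sigma} \right ) + \frac{1}{2\tau}\sum_{k=1}^m\Bigl((D^k\nabla v_k^{n+1},\nabla v_k^{n+1})-(D^k\nabla v_k^{n},\nabla v_k^{n})\Bigr) \le (\bm g^{n+\sigma},\nabla v^{n+\sigma}).
\]

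We bound the right-hand side by the weighted Cauchy--Schwarz inequality used in Theorem~\ref{t-1}, absorbing half of the $\widetilde{K}^m$ term. We then multiply by $2\tau$, sum over $n=0,\dots,N-1$ (the sum telescopes), and use $v_k^0=0$ from \eqref{4.10}. This yields \eqref{4.11}, with $\bm g^{n+\sigma}$ on the right; the $\nabla$ in front of $\bm g$ in the statement is presumably a typo. Unconditional stability follows because no restriction on $\tau$ enters.

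The main obstacle is the algebraic bookkeeping in the second step. We must check that the cross terms $(D^k\nabla v_k^{n+\sigma},\nabla v^{n+\sigma})$ from \eqref{4.7} and the terms from \eqref{4.8} tested with both choices of $z_k$ cancel or combine exactly into the square $(\widetilde{D}^k\nabla w,\nabla w)$. This works because \eqref{4.8} is posed in the $\widetilde{D}^k$-weighted gradient form \eqref{4.6} rather than in the $L_2$ form \eqref{4.2}; that is precisely the reason the modified equations were introduced. We expect a factor-of-two discrepancy to appear in front of the $D^k$ energy, as in the passage from \eqref{3.27}. If it does, we adjust the constants and keep only a weaker but analogous bound.
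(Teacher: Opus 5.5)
Your proposal is correct and follows the paper's proof essentially step for step. Like the paper, you test \eqref{4.7} with $v^{n+\sigma}$, use \eqref{4.8} (equivalently \eqref{4.12}) with both $z_k=v^{n+\sigma}$ and $z_k=(v_k^{n+1}-v_k^n)/\tau$, complete the square in $\widetilde{D}^k$, apply the weighted-scheme identity for $\sigma\ge 1/2$, then Cauchy--Schwarz, and telescope using $v_k^0=0$.

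The factor-of-two discrepancy you anticipate does not occur: multiplying by $2\tau$ gives \eqref{4.11} exactly. You are right that $\nabla\bm g$ should read $\bm g$. Your identity also correctly gives a lower bound on the cross term $\bigl(D^k\nabla v_k^{n+\sigma},\nabla (v_k^{n+1}-v_k^n)/\tau\bigr)$, whereas the paper's text writes that inequality as ``$\le$''.
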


\begin{proof}
Analogously to \eqref{3.23}, from \eqref{4.8} we have
\begin{equation}\label{4.12}
\left (D^k \nabla v_k^{n+\sigma}, \nabla z_k \right )  = \left (\widetilde{D}^k \nabla v^{n+\sigma}, \nabla z_k \right ) - 
 \left(\widetilde{D}^k \nabla \frac{v_k^{n+1} - v_k^{n}}{\tau}, \nabla z_k\right) ,
 \quad k = 1,2,\ldots,m .
\end{equation} 
Substituting into \eqref{4.7} and setting $z = v^{n+\sigma}$, taking into account the introduced notation we obtain 
\begin{equation}\label{4.13}
 \left ( \widetilde{ K}^m \nabla v^{n+\sigma}, \nabla v^{n+\sigma} \right )
 + \sum_{k=1}^m \left ( \widetilde{D}^k \nabla v^{n+\sigma}, \nabla v^{n+\sigma} \right ) 
 - \sum_{k=1}^m \left ( \widetilde{D}^k \nabla \frac{v_k^{n+1} - v_k^{n}}{\tau}, \nabla v^{n+\sigma} \right ) 
 = (\bm g^{n+\sigma}, \nabla v^{n+\sigma}) .
\end{equation} 
In \eqref{4.12} we set
\[
  z_k = \frac{v_k^{n+1} - v_k^{n}}{\tau} ,
  \quad k = 1,2,\ldots,m ,
\]
which gives
\begin{equation}\label{4.14}
\begin{split}
\left (D^k \nabla v_k^{n+\sigma}, \nabla \frac{v_k^{n+1} - v_k^{n}}{\tau} \right )  & - \left (\widetilde{D}^k \nabla v^{n+\sigma}, \nabla \frac{v_k^{n+1} - v_k^{n}}{\tau} \right ) \\  
& + 
 \left(\widetilde{D}^k \nabla \frac{v_k^{n+1} - v_k^{n}}{\tau}, \nabla \frac{v_k^{n+1} - v_k^{n}}{\tau} \right) = 0, 
 \quad k  = 1,2,\ldots,m .
\end{split}
\end{equation} 
Adding \eqref{4.13} and \eqref{4.14}, we obtain the inequality
\begin{equation}\label{4.15}
 \left ( \widetilde{ K}^m \nabla v^{n+\sigma}, \nabla v^{n+\sigma} \right )
 + \sum_{k=1}^m  \left (D^k \nabla v_k^{n+\sigma}, \nabla \frac{v_k^{n+1} - v_k^{n}}{\tau} \right ) 
 \leq (\bm g^{n+\sigma}, \nabla v^{n+\sigma}) .
\end{equation} 
Taking into account that
\[
v_k^{n+\sigma} = \left(\sigma - \frac{1}{2}\right) \tau \frac{v_k^{n+1} - v_k^{n}}{\tau} + \frac{v_k^{n+1} + v_k^{n}}{2},
\]
for the second term in \eqref{4.15} when $\sigma \geq 1/2$ we have
\[
 \left (D^k \nabla v_k^{n+\sigma}, \nabla \frac{v_k^{n+1} - v_k^{n}}{\tau} \right ) \le 
 \frac{1}{2 \tau} \left ( \left ( D^k \nabla v_k^{n+1}, \nabla v_k^{n+1}  \right ) -
\left ( D^k \nabla v_k^{n}, \nabla v_k^{n}  \right ) \right ) .
\]
Substituting into \eqref{4.15} and taking into account
\[
 (\bm g^{n+\sigma}, \nabla v^{n+\sigma}) \le \frac 12 
  \left ( \widetilde{ K}^m \nabla v^{n+\sigma}, \nabla v^{n+\sigma} \right )
  + \frac 12 \left ( \left ( \widetilde{ K}^m  \right )^{-1} \nabla \bm g^{n+\sigma}, \nabla \bm g^{n+\sigma} \right ) ,
\]
we obtain the inequality
\[
\begin{split}
 \tau \left ( \left ( \widetilde{ K}^m  \right )^{-1} \nabla \bm g^{n+\sigma}, \nabla \bm g^{n+\sigma} \right ) 
 & + \sum_{k=1}^m \left ( D^k \nabla v_k^{n+1}, \nabla v_k^{n+1}  \right ) - \sum_{k=1}^m 
 \left ( D^k \nabla v_k^{n}, \nabla v_k^{n}  \right ) \\
 & \le \tau \left ( \left ( \widetilde{ K}^m  \right )^{-1} \nabla \bm g^{n+\sigma}, \nabla \bm g^{n+\sigma} \right ) .
\end{split}
\]
Summing over $n$ from $0$ to $N-1$ leads us to the desired estimate \eqref{4.11}.
\end{proof}

We note some features of the computational implementation of the considered two-level schemes \eqref{4.7}--\eqref{4.10}.
We seek an approximate solution of the boundary value problem for a weakly coupled system of equations.
We would like to be able to advance to a new time level by solving separate subproblems for the unknowns.

We consider a variant of spatial approximation using the same finite element basis for both $v(\bm x,t)$ and $v_k(\bm x,t)$, $k = 1,2,\ldots,m$. This allows us to express $v_k^{n+1}$ from \eqref{4.8} as follows:
\begin{equation}\label{4.16}
 \left(D^k \nabla v_k^{n+1}, \nabla z\right) =
 \frac{\tau}{1 + \sigma \lambda_k \tau} 
 \left (D^k \nabla v^{n+\sigma}, \nabla z \right )  
 + (f_k^{n+\sigma}, z),
\end{equation} 
where 
\[
	(f_k^{n+\sigma}, z) = \frac{1}{1 + \sigma \lambda_k \tau} \Big(\big(1 - (1-\sigma) \lambda_k \tau\big) \left(D^k \nabla v_k^{n}, \nabla z\right),
	\quad k = 1,2,\ldots,m.
\]
Substituting into \eqref{4.7}, we obtain the following problem for finding $v^{n+\sigma}$:
\begin{equation}\label{4.17}
 \left ( \widetilde{ K}^m \nabla v^{n+\sigma}, \nabla z \right )
 + \sum_{k=1}^m  \frac{\tau}{1 + \sigma \lambda_k \tau} \left (D^k \nabla v^{n+\sigma}, \nabla z \right ) 
 = (\bm g^{n+\sigma}, \nabla v^{n+\sigma}) - (f_k^{n+\sigma}, z).
\end{equation} 
For $v^{n+1}$ we have
\[
 v^{n+1} = \frac{1}{\sigma} \left (v^{n+\sigma} - (1-\sigma) v^{n} \right ) .
\]  
Thus, determining $v^{n+1}$ at the new time level is carried out by solving problem \eqref{4.17}, which is similar to the standard problem of steady-state filtration with a tensor permeability coefficient.

The calculation of the auxiliary quantities $v_k^{n+1}$, $k = 1,2,\ldots,m$, according to \eqref{4.16} is not very good for computational implementation: we have problems with Neumann boundary conditions, and there are $m$ of them.
We formulated such a problem in order to prove the stability of the approximate solution as simply as possible.
Simplification of the problem of calculating the auxiliary quantities $v_k^{n+1}$, $k = 1,2,\ldots,m$ is achieved by transitioning from equations \eqref{4.6} to equations \eqref{4.2}.
In this case, instead of \eqref{4.16}, explicit calculation formulas are used:
\[
 \left( v_k^{n+1}, z\right) =
 \frac{\tau}{1 + \sigma \lambda_k \tau} 
 \left ( v^{n+\sigma}, z \right )  
 + \frac{1}{1 + \sigma \lambda_k \tau} \big(1 - (1-\sigma) \lambda_k \tau\big) \left( v_k^{n}, z\right),
	\quad k = 1,2,\ldots,m.
\]
The computational complexity of the approximate solution of the considered nonlocal problem is only slightly higher than the complexity of a standard local filtration problem. It only requires the additional solution of $m$ simple auxiliary local evolutionary problems using explicit procedures.

\section{Numerical examples}\label{sec:5}

The proposed computational technology for describing unsteady flows in a porous medium taking into account memory effects is illustrated with a two-dimensional test problem.
At the macroscale, the computational domain is a rectangle
\[
  \Omega = \{ \bm x = (x_1, x_2) \ | \ 0 < x_1 < 2, \ 0 < x_2 < 1 \} .
\]
In the absence of body forces ($\bm f(\bm x,t) = 0$) and without accounting for memory effects, steady-state filtration is described by the equation
\begin{equation}\label{5.1}
- \nabla \cdot \left ( \accentset{-}{K} \nabla p_0 \right )
= 0, 
\quad \bm x \in \Omega .
\end{equation} 
Equation \eqref{5.1} is supplemented with the boundary conditions
\begin{equation}\label{5.2}
 p_0(0,x_2) = 0,
 \quad p_0(2,x_2) = 1,
 \quad \left ( \accentset{-}{K} \nabla p_0 \right ) \cdot \bm \nu = 0, 
 \quad x_2 = 0, 1 .
\end{equation} 

The permeability tensor $\accentset{-}{K}$ is determined according to \eqref{3.7}--\eqref{3.10}. We consider the case where the solid part of the periodicity cell $Y_s$ is an ellipse rotated by 45 degrees, its area is $\pi / 12$, and the ratio of the ellipse's semi-axes is $\gamma$. The calculations were performed on a mesh with a characteristic cell size $h = 0.01$; quadratic finite elements were used for velocity and linear elements for pressure.

\begin{figure}[ht]
	\centering
	\includegraphics[width=0.48\linewidth]{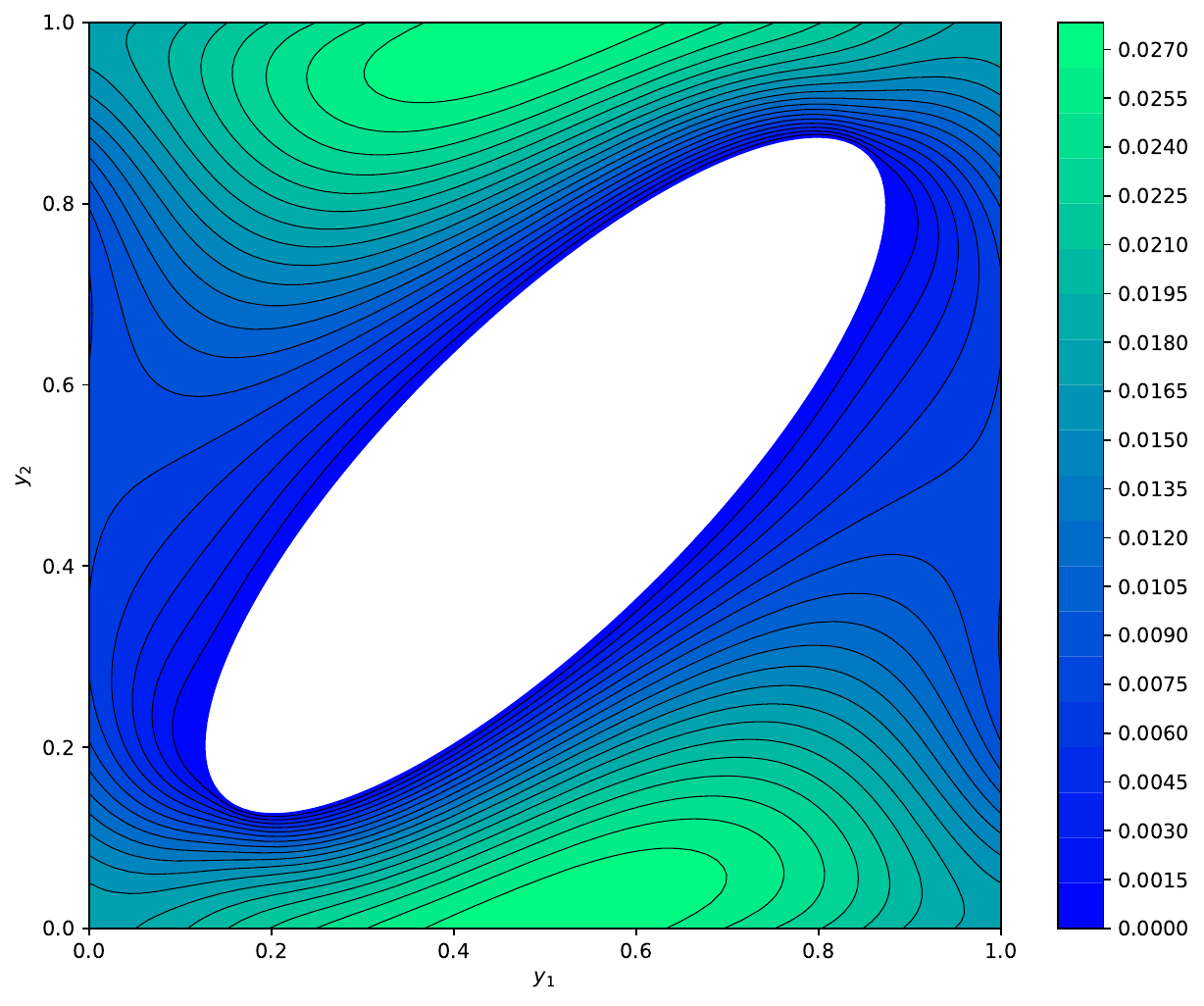}
	\includegraphics[width=0.48\linewidth]{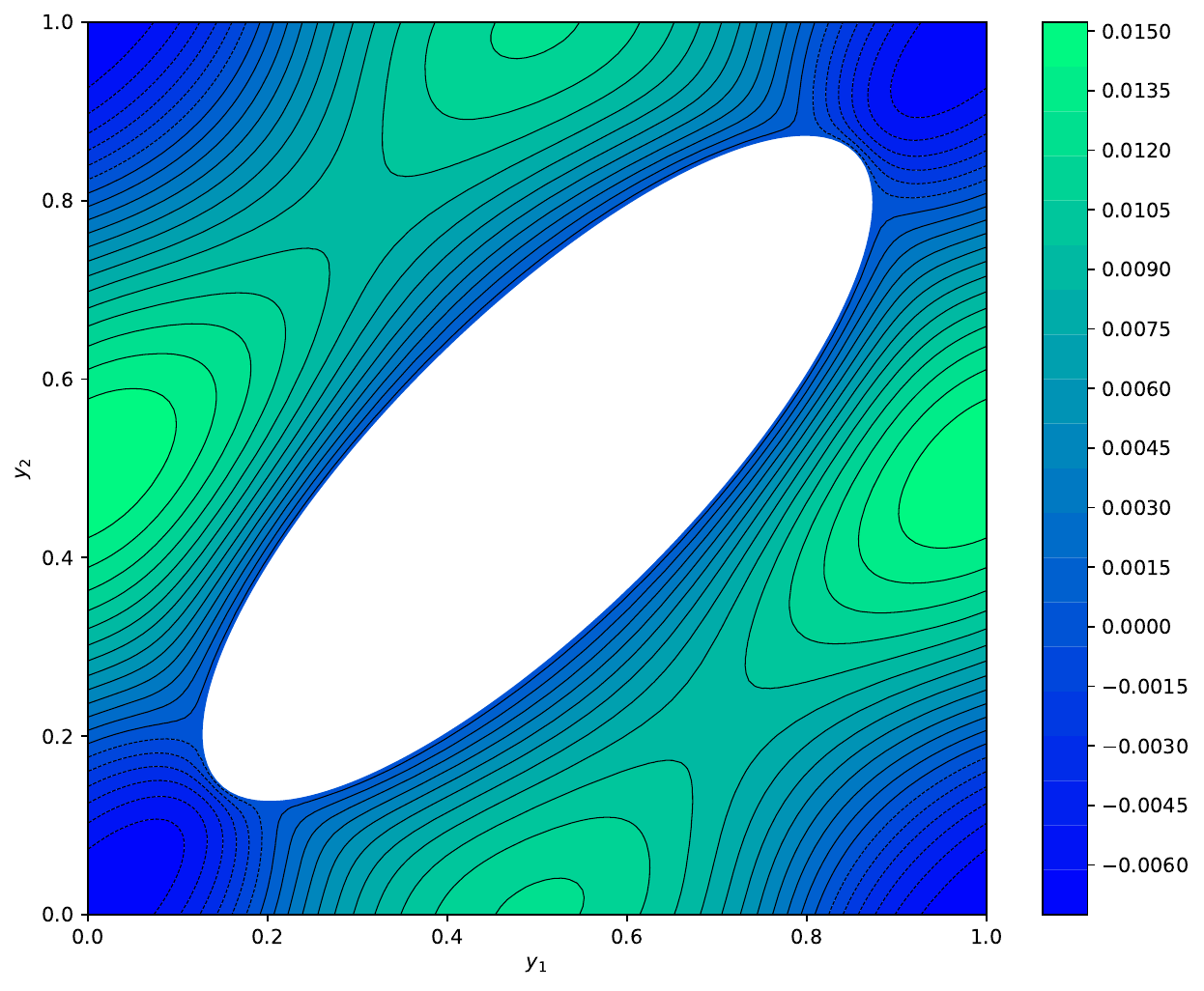}
	\caption{Auxiliary functions $\bm w_1^0 \cdot \bm e_1$ (left) and $\bm w_2^0 \cdot \bm e_1$ (right).}
	\label{f-2}
\end{figure}

Fig.~\ref{f-2} shows, as an example, the components of the solution of the cell problem \eqref{3.7}--\eqref{3.9} for $\gamma = 3$. After processing these solutions according to \eqref{3.10}, we calculate the permeability tensor. The corresponding data for different aspect ratios of the ellipse are given in Table~\ref{tab-1}. As the elongation of the ellipse increases, the off-diagonal components of the permeability tensor increase, while the diagonal components decrease.

\begin{table}[ht]
\centering
\caption{Permeability tensor}
\label{tab-1}
\vspace{1ex}
\begin{tabular}{lcc}
\toprule
$\gamma$ & $\accentset{-}{K}_{11} = \accentset{-}{K}_{22}$ & $\accentset{-}{K}_{12} = \accentset{-}{K}_{21}$ \\
\midrule    
1 & 0.01269975 & 0.00000000 \\
2 & 0.01144540 & 0.00251806 \\
3 & 0.00981454 & 0.00437231 \\
4 & 0.00855774 & 0.00604958 \\
\bottomrule
\end{tabular}
\end{table}

The calculated permeability tensors are used to solve the filtration problem \eqref{5.1}, \eqref{5.2} at the macroscale.  
For $\gamma = 1$ (scalar permeability coefficient), we obtain a linear pressure profile in the horizontal direction.
Anisotropic permeability (see Fig.~\ref{f-3}) leads to a significant nonuniformity of pressure across the height.

\begin{figure}[ht]
	\centering
	\includegraphics[width=0.49\linewidth]{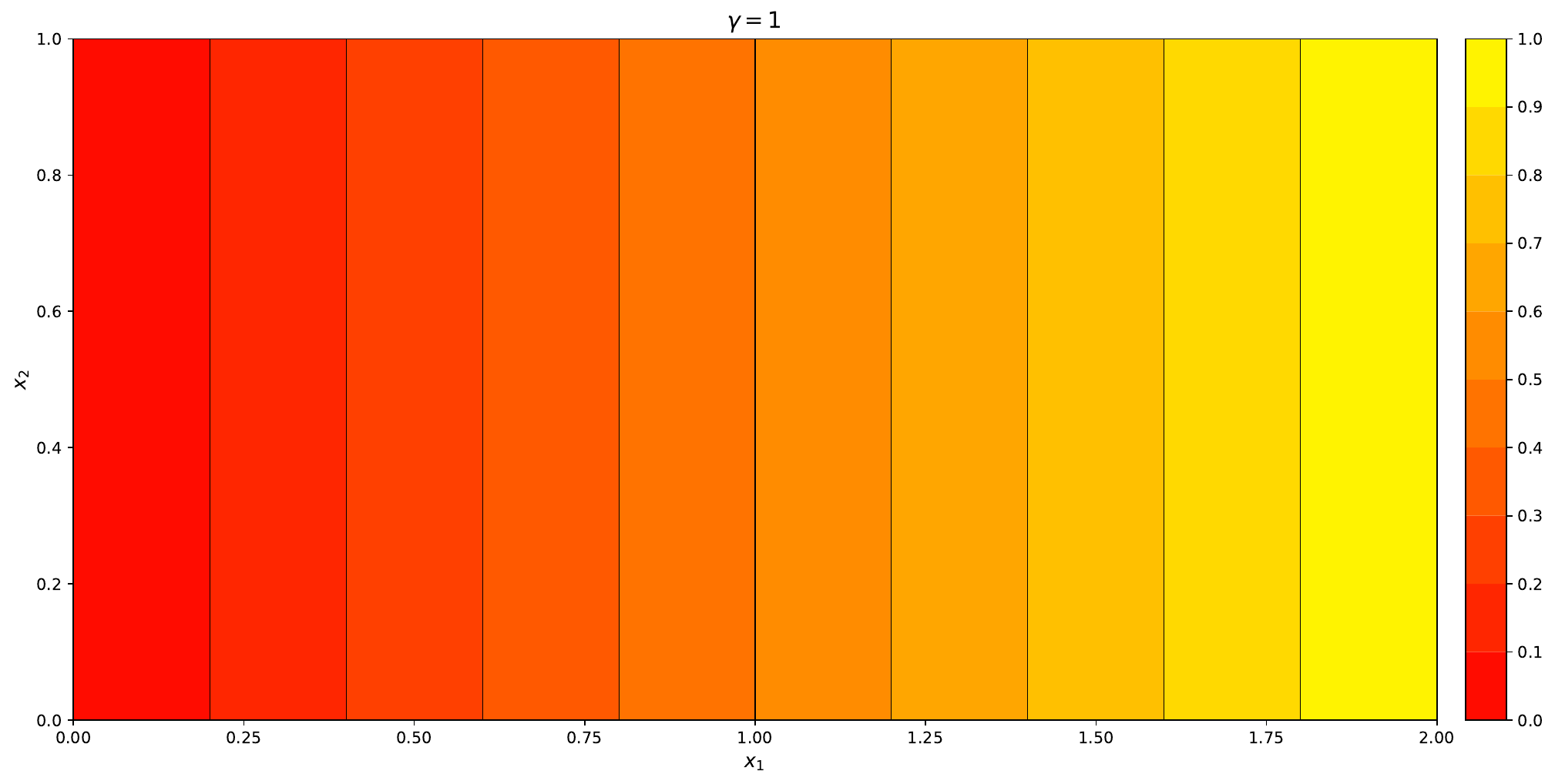} 
	\includegraphics[width=0.49\linewidth]{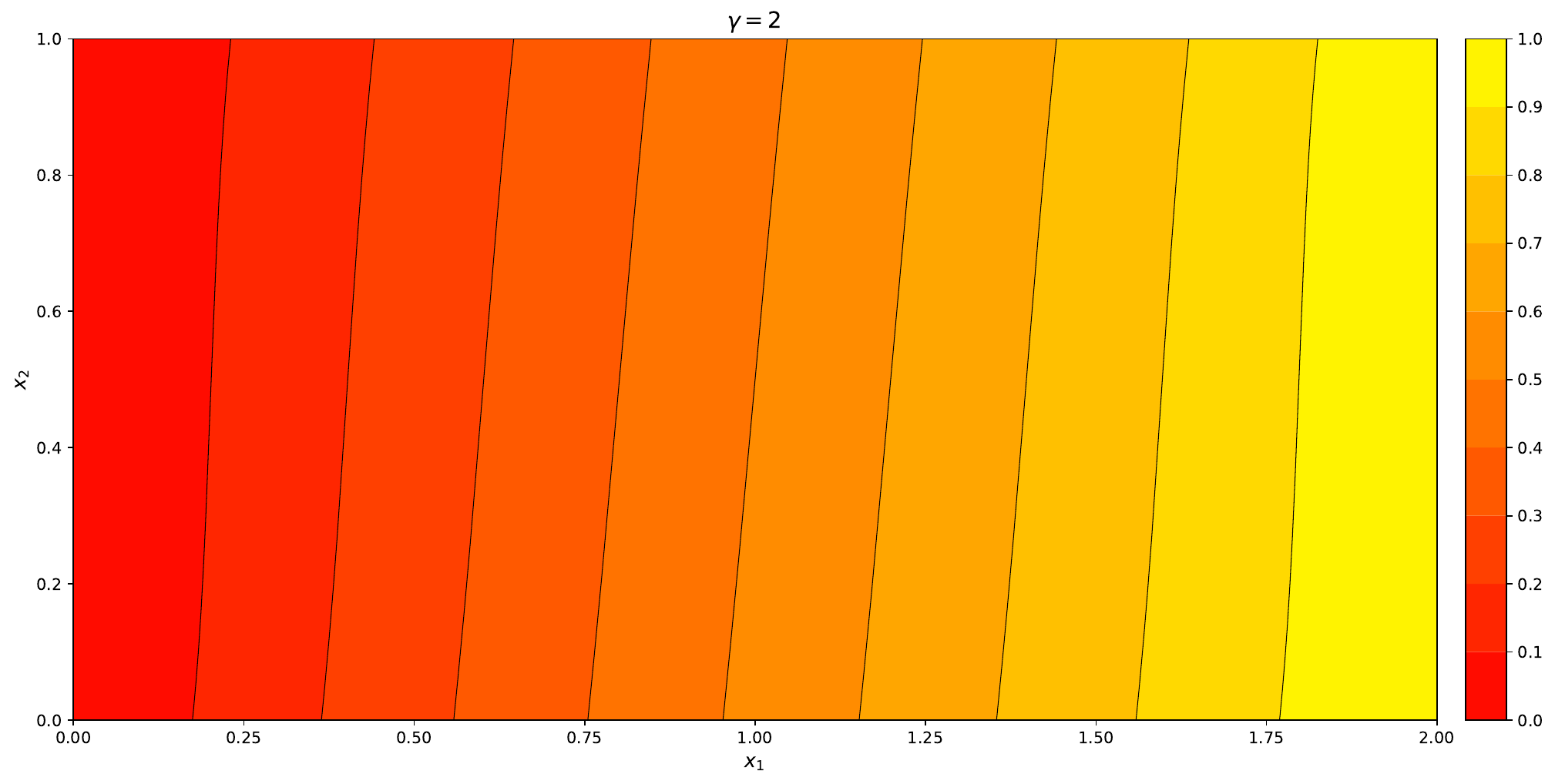} \\
	\includegraphics[width=0.49\linewidth]{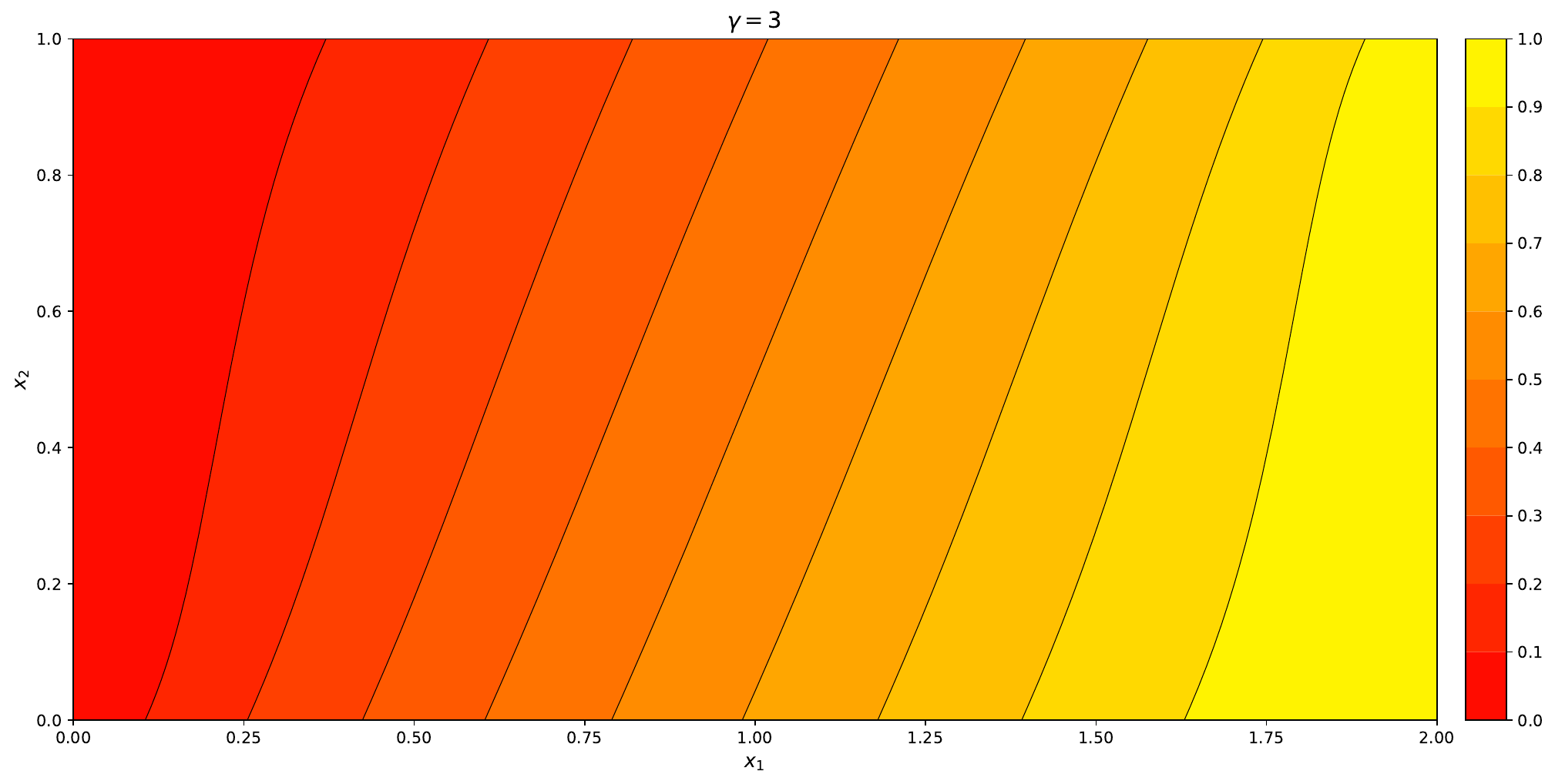} 
	\includegraphics[width=0.49\linewidth]{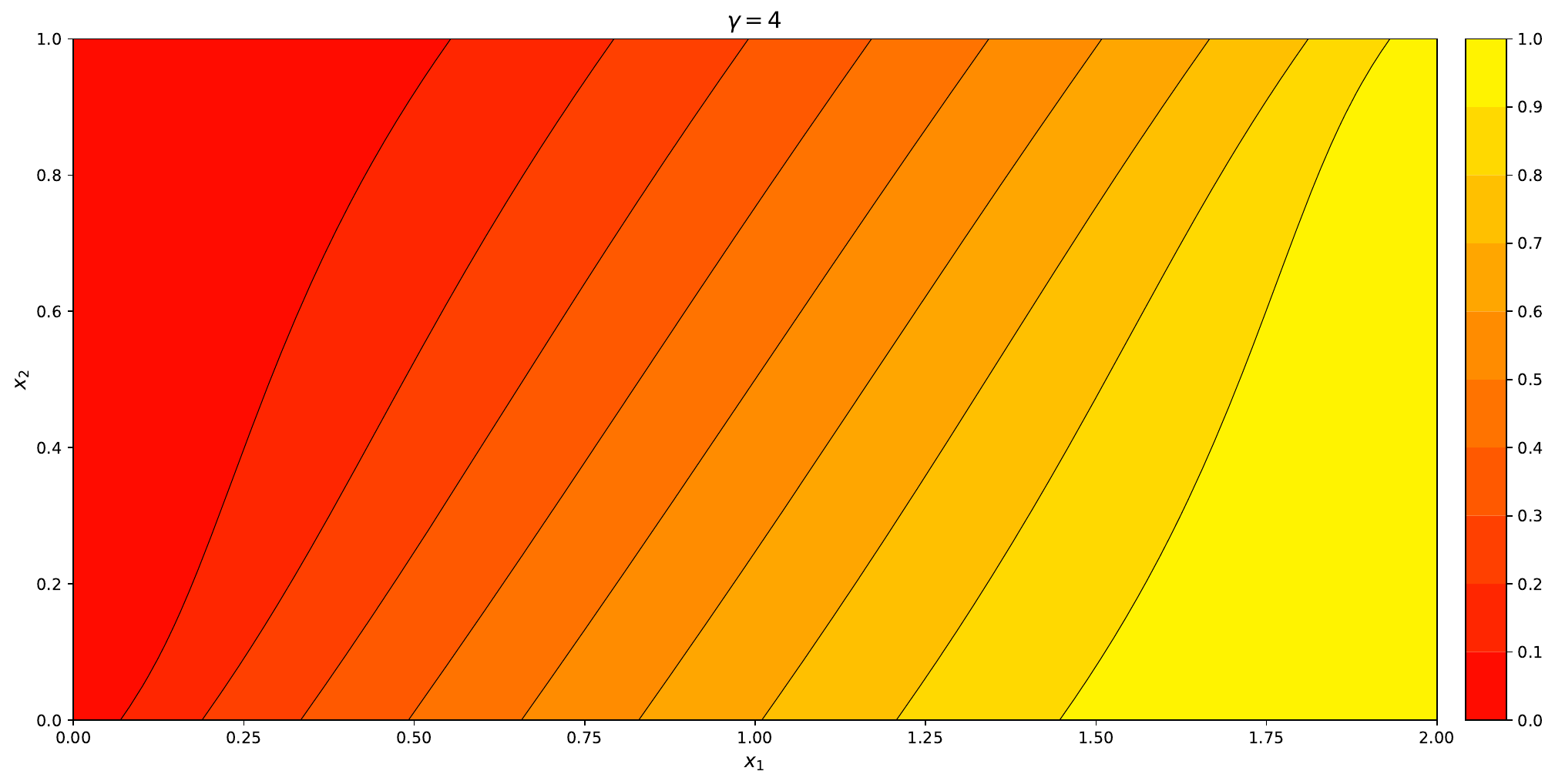} \\    
	\caption{Pressure for different pore geometries.}
	\label{f-3}
\end{figure}

The approximation of the memory kernel is performed by solving the spectral problem \eqref{3.1}--\eqref{3.3} in the domain $Y_f$.
The accuracy of the eigenvalue computation is controlled by calculations on a sequence of refined meshes.
We use three computational meshes: Mesh 1 --- characteristic mesh size $h = 0.02$, 2,365 nodes; Mesh 2 --- $h = 0.01$, 8,973 nodes; Mesh 3 --- $h = 0.005$, 35,015 nodes.
The first 10 eigenvalues are presented in Table~\ref{tab-2}. We observe convergence of the eigenvalues as the mesh is refined. The components $\bm \varphi_k \cdot \bm e_1, \ k = 1,2, \ldots$ of the first four eigenfunctions obtained on the primary computational mesh 2 are shown in Fig.~\ref{f-4}. Due to the symmetry of the solution of the considered cell problem, the components $\bm \varphi_k \cdot \bm e_2, \ k = 1,2, \ldots$ are not presented.
The computed eigenfunctions $\bm \varphi_k, \ k = 1,2, \ldots$ are normalized: $\|\bm \varphi_k \| = 1, \ k = 1,2, \ldots$.

\begin{table}[ht]
\centering
\caption{First 10 eigenvalues}
\label{tab-2}
\vspace{1ex}
\begin{tabular}{rrrr}
\toprule
$k$ & Mesh 1 & Mesh 2 & Mesh 3 \\
\midrule    
1 & 40.33104  &     40.35215  &    40.35746 \\
2 & 51.14206  &     51.23001  &    51.25329 \\
3 & 114.24218  &     114.35255  &    114.38035 \\
4 & 139.04402  &     139.18545  &    139.22217 \\
5 & 165.53322  &     165.60993  &    165.62792 \\
6 & 171.49287  &     171.72568  &    171.78598 \\
7 & 176.64171  &     176.71223  &    176.72980 \\
8 & 216.34115  &     216.66890  &    216.75484 \\
9 & 219.82942  &     219.91384  &    219.93342 \\
10 &238.26248  &     238.36223  &    238.38799 \\
\bottomrule
\end{tabular}
\end{table}

\begin{figure}[ht]
	\centering
	\includegraphics[width=0.48\linewidth]{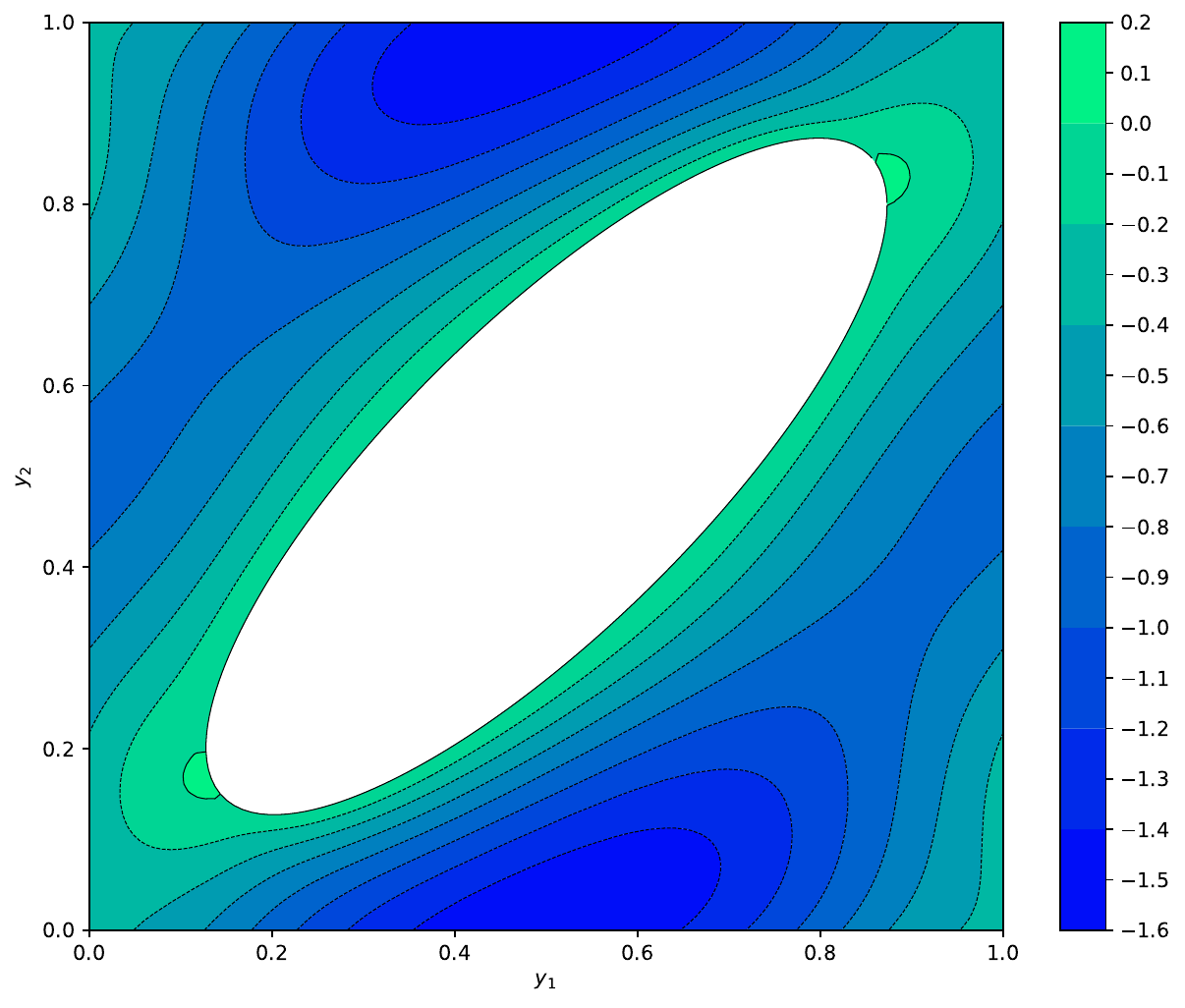}
	\includegraphics[width=0.48\linewidth]{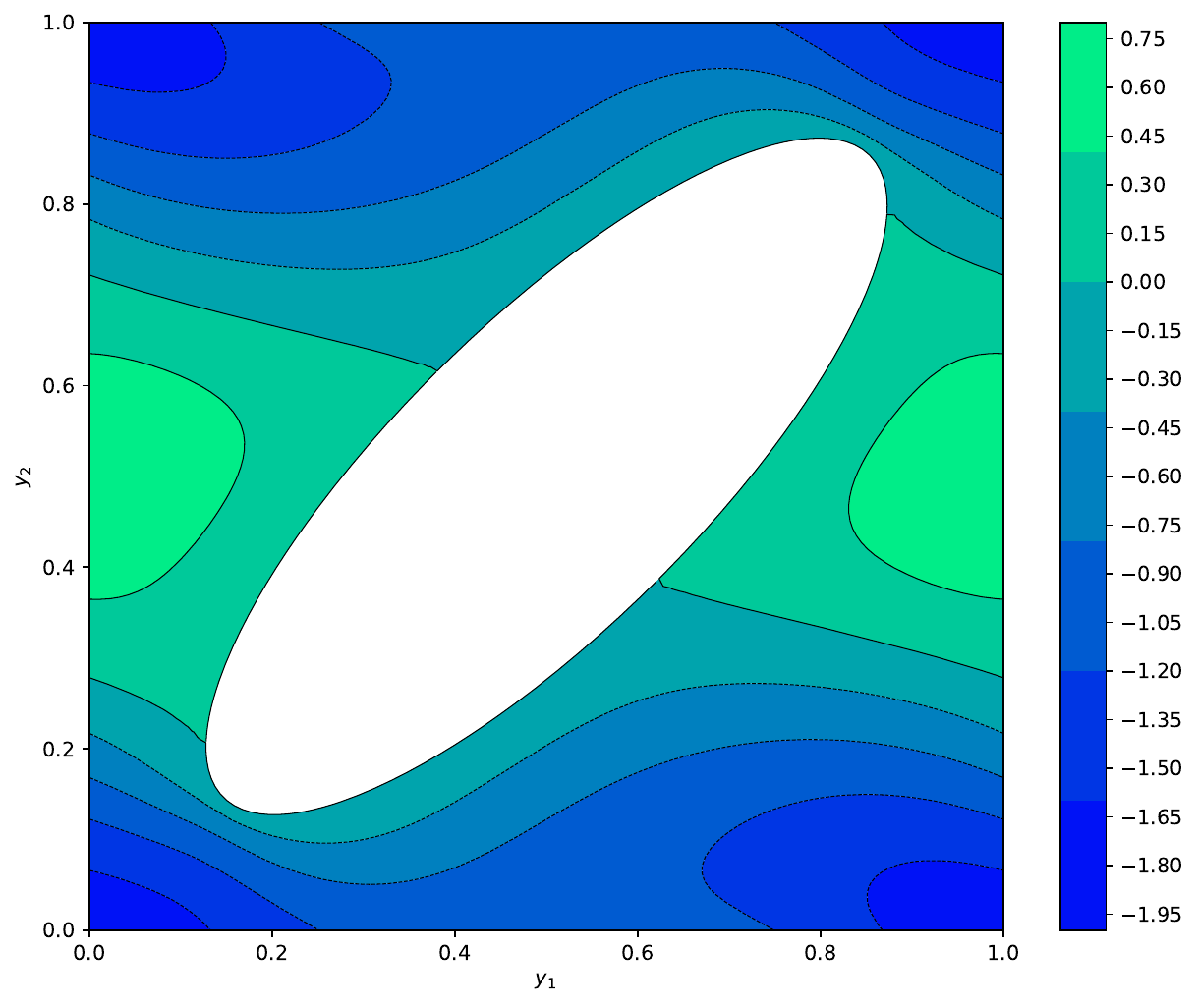} \\
	\includegraphics[width=0.48\linewidth]{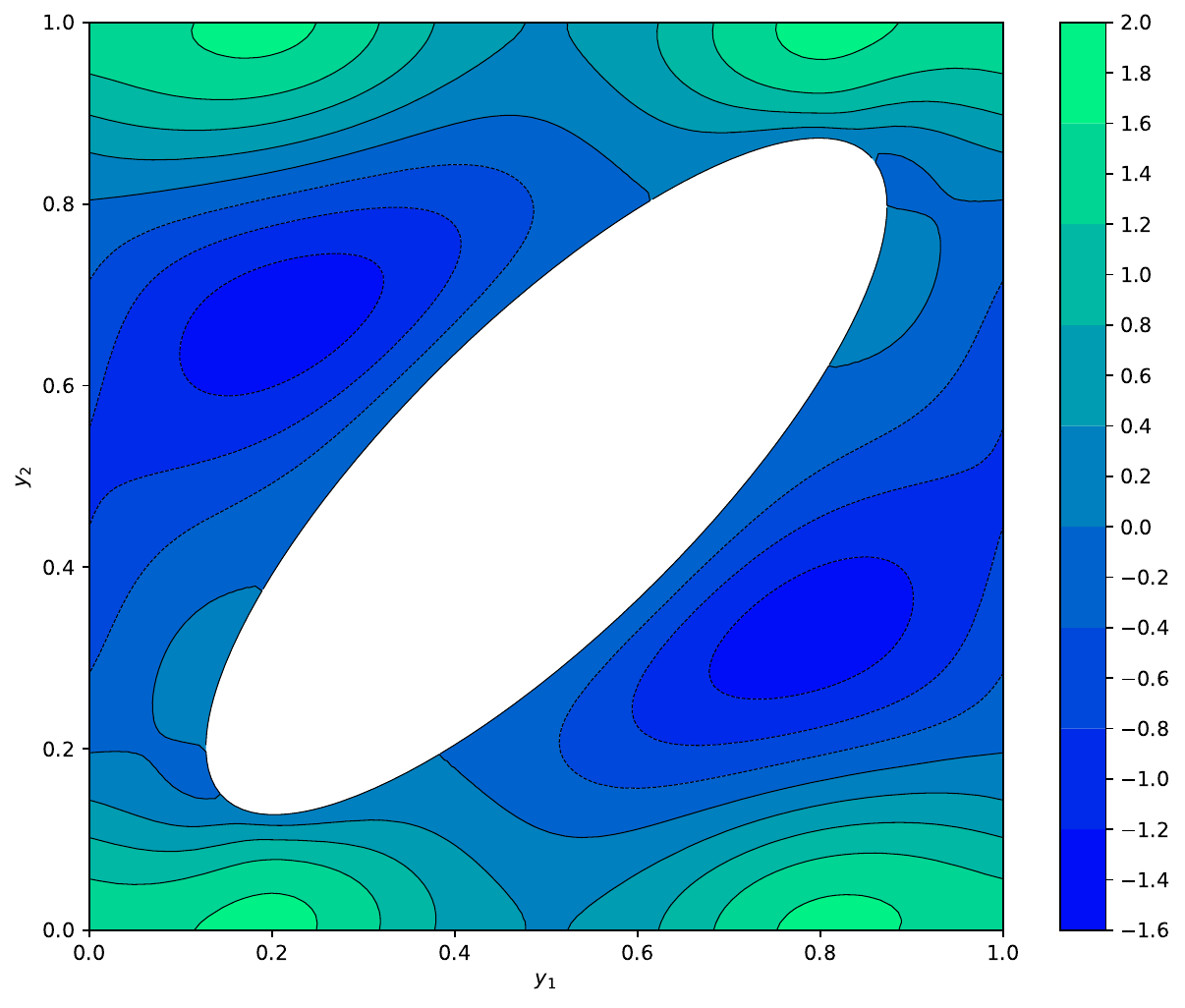}
	\includegraphics[width=0.48\linewidth]{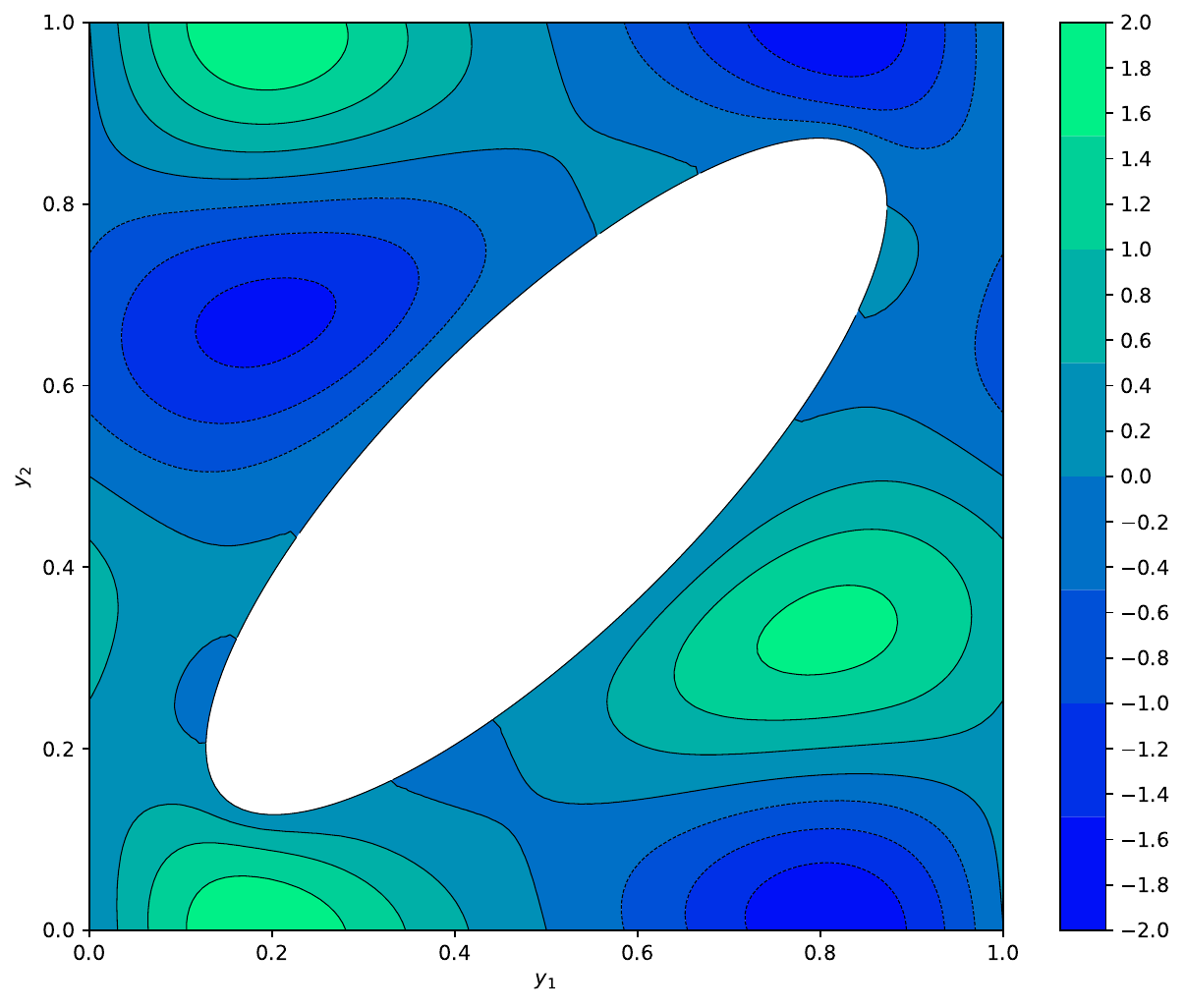} \\
	\caption{Eigenfunctions: top row --- $\bm \varphi_1 \cdot \bm e_1$ (left) and $\bm \varphi_2 \cdot \bm e_1$ (right), bottom row --- $\bm \varphi_3 \cdot \bm e_1$ (left) and $\bm \varphi_4 \cdot \bm e_1$ (right).}
	\label{f-4}
\end{figure}

To approximate the kernel, we use the first 100 eigenvalues ($m=100$ in \eqref{3.12}). The corresponding eigenvalues $\lambda_k, \ k=1,2,\dots,m$ are shown in Fig.~\ref{f-5}.
In the range $k=1,2,\dots,m$, approximately linear growth of the eigenvalues $\lambda_k$ with increasing $k$ is observed.

The components of the permeability tensor are determined (see \eqref{3.6}) by the coefficients $a_1^k, \ a_2^k, \ k=1,2,\dots.$ The first 100 such coefficients are shown in Fig.~\ref{f-6}. 
Blue color corresponds to positive values of the coefficients, and red to negative ones.
We see that a significant number of these coefficients have small amplitude.

\begin{figure}[ht]
	\centering
	\includegraphics[width=0.75\linewidth]{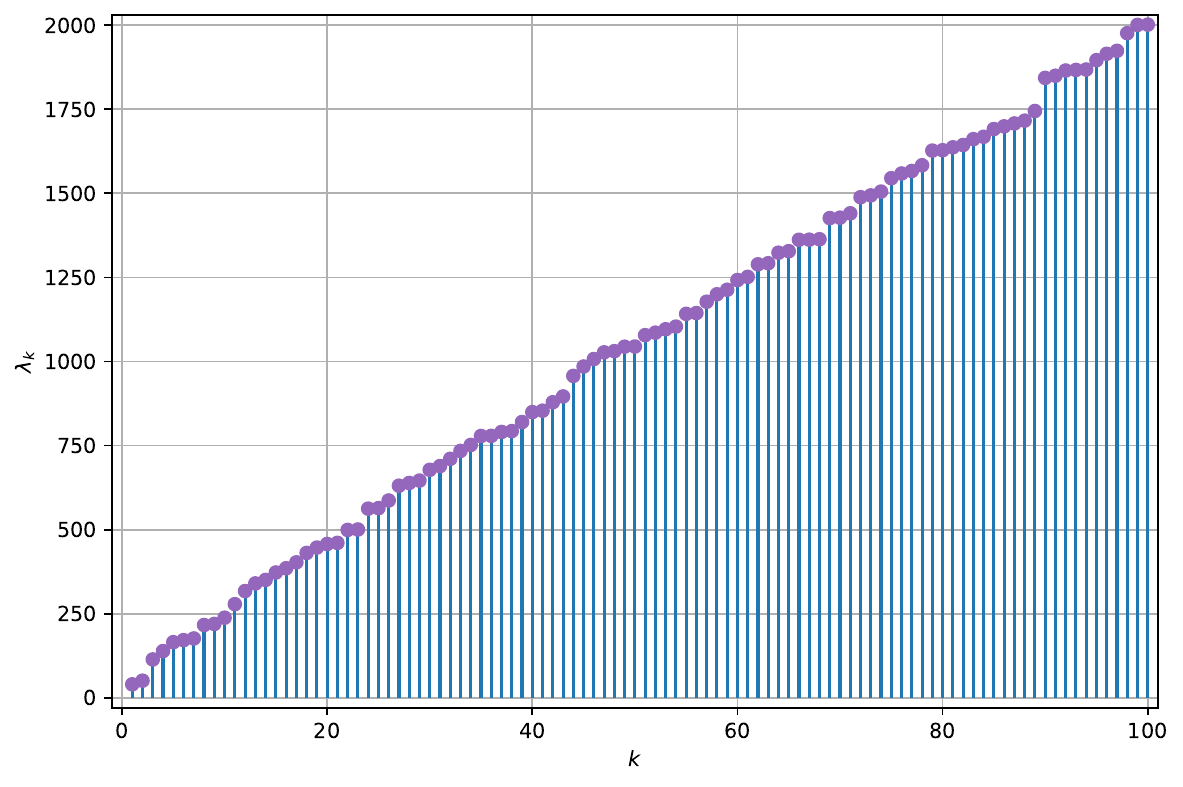}
	\caption{Eigenvalues $\lambda_k, \ k=1,2,\dots,m$.}
	\label{f-5}
\end{figure}

\begin{figure}[ht]
	\centering
	\includegraphics[width=0.75\linewidth]{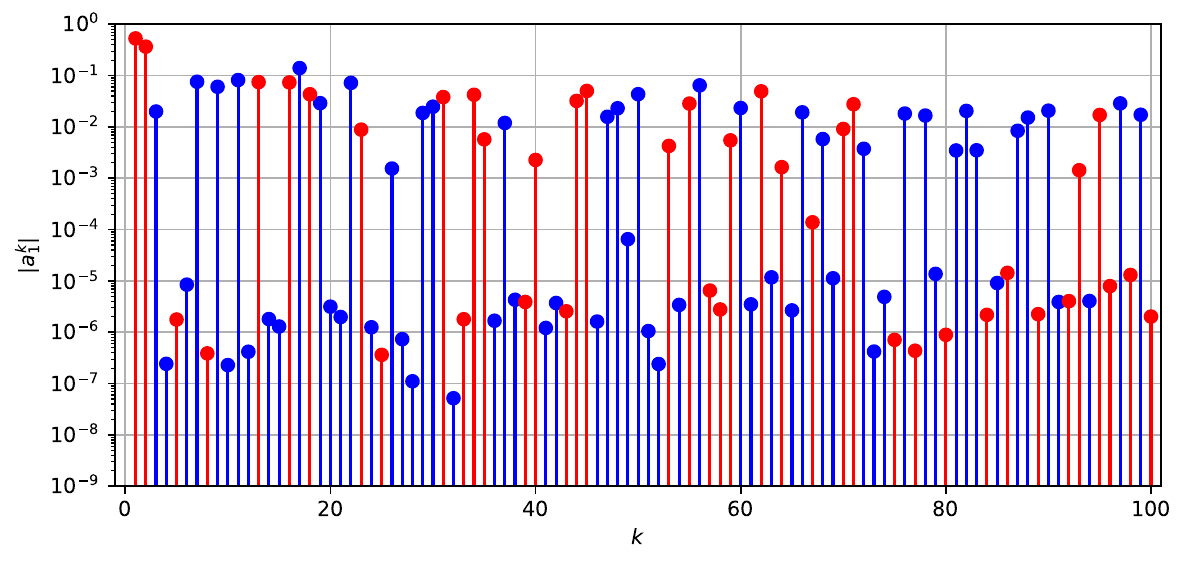}
	\includegraphics[width=0.75\linewidth]{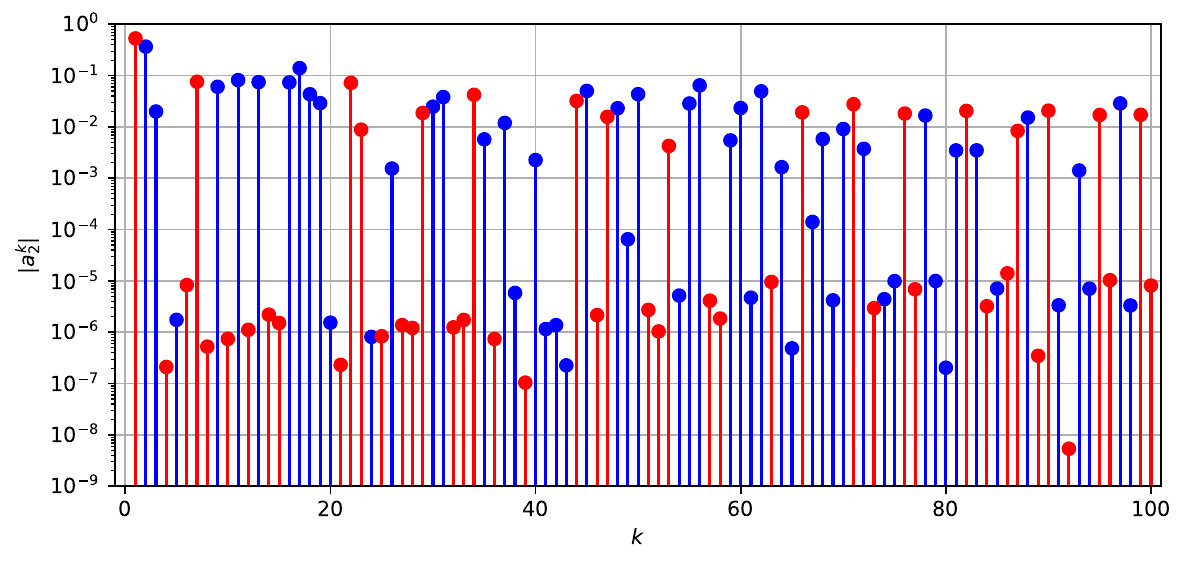}    
	\caption{Coefficients $|a_1^k|$ (top) and $|a_2^k|, \ k=1,2,\dots,m$ (bottom).}
	\label{f-6}
\end{figure}

In the macroscale problem \eqref{3.18}--\eqref{3.21}, the account of memory effects is provided by the tensors $D^k, \ k=1,2,\ldots,m$. Their influence can be estimated by the change in the steady-state permeability tensor $\widetilde{ K}^m$ without memory. The elements of this tensor are calculated according to \eqref{3.13}. For the considered test problem, the components of the tensor are shown in Fig.~\ref{f-7}. We can note a rapid decrease in the amplitude of the diagonal and off-diagonal elements for the first eigenvalues and small refinements for large $m$.

\begin{figure}[ht]
	\centering
	\includegraphics[width=0.75\linewidth]{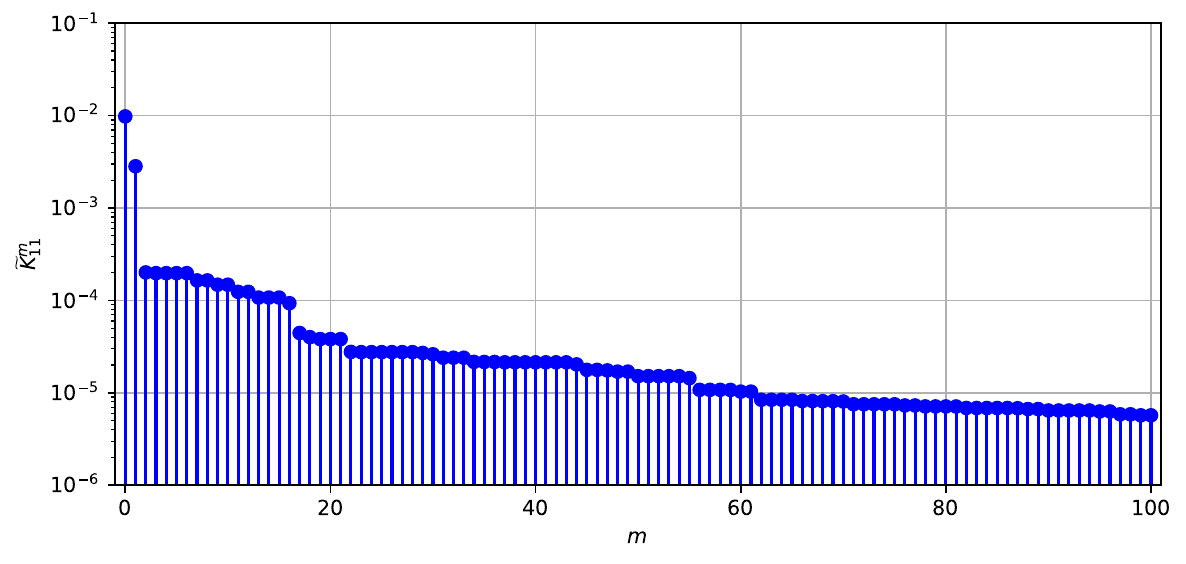}
	\includegraphics[width=0.75\linewidth]{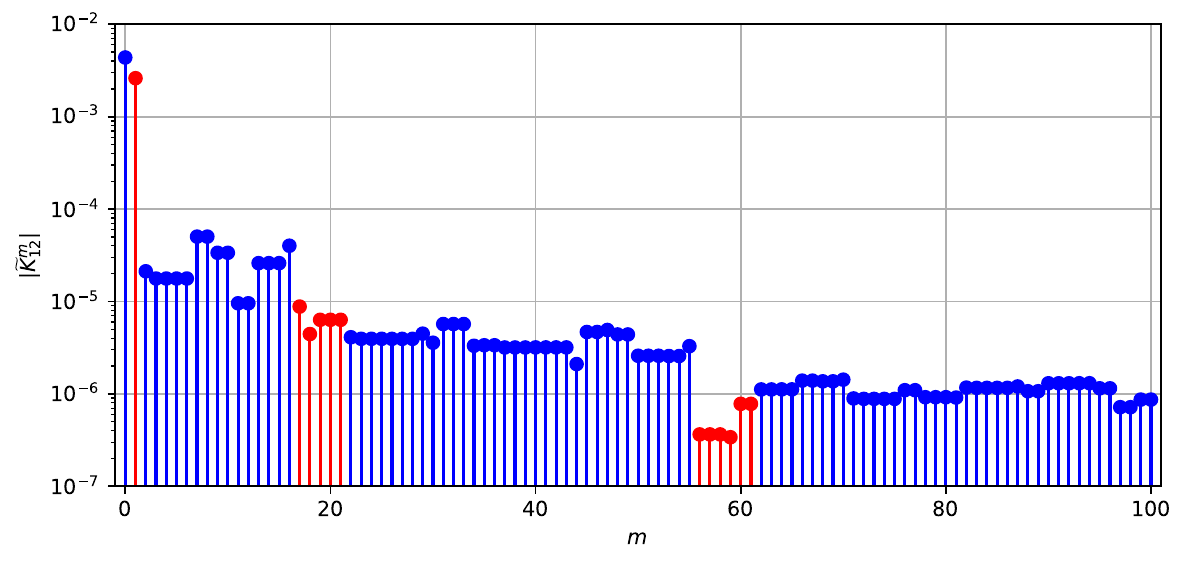}    
	\caption{Elements of the steady-state conductivity tensor: diagonal (top) and off-diagonal (bottom).}
	\label{f-7}
\end{figure}

We can neglect the influence of harmonics for which the elements of the tensor $\widetilde{D}^k$ are small:
\[
  | \widetilde{D}^k_{ij} | =\frac{|a_i^k a_j^k|}{\lambda_k} \le \epsilon .
\]
We discard those terms that exceed the threshold value $\epsilon$.
The result of such a selection for different values of $\epsilon$ is shown in Fig.~\ref{f-8}.
For example, for $\epsilon = 10^{-5}$ we take into account only 10 eigenvalues. Without a noticeable increase in the accuracy of the computation of the conductivity tensor elements, this result is achieved without filtering when accounting for already 22 eigenvalues.  
For $\epsilon = 10^{-6}$, we can limit ourselves to only 20 eigenvalues out of 62.

\begin{figure}[ht]
	\centering
	\includegraphics[width=0.75\linewidth]{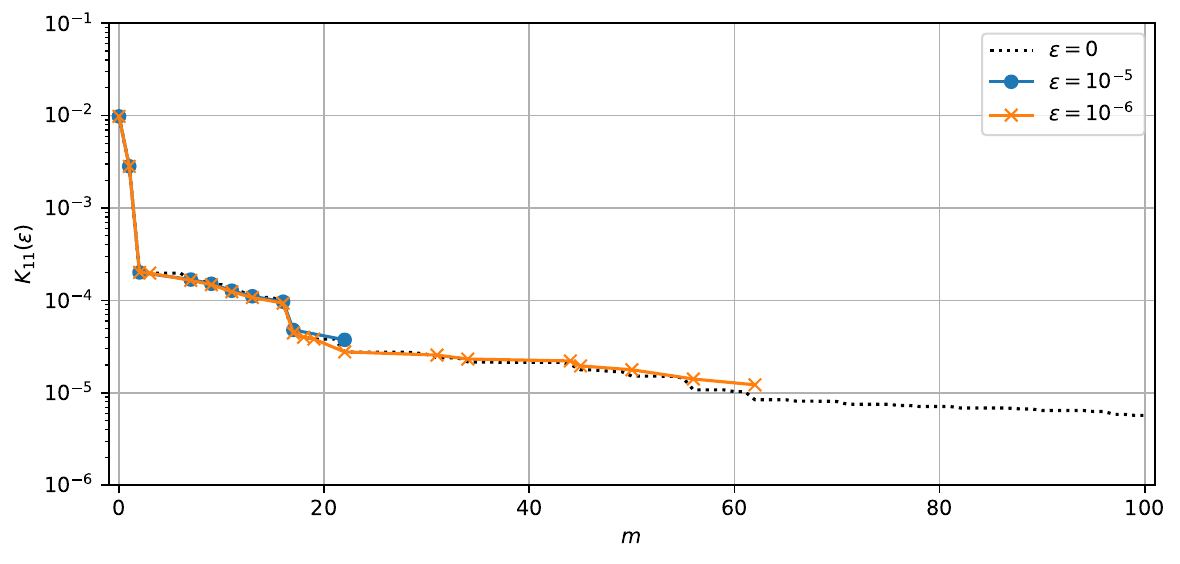}
	\includegraphics[width=0.75\linewidth]{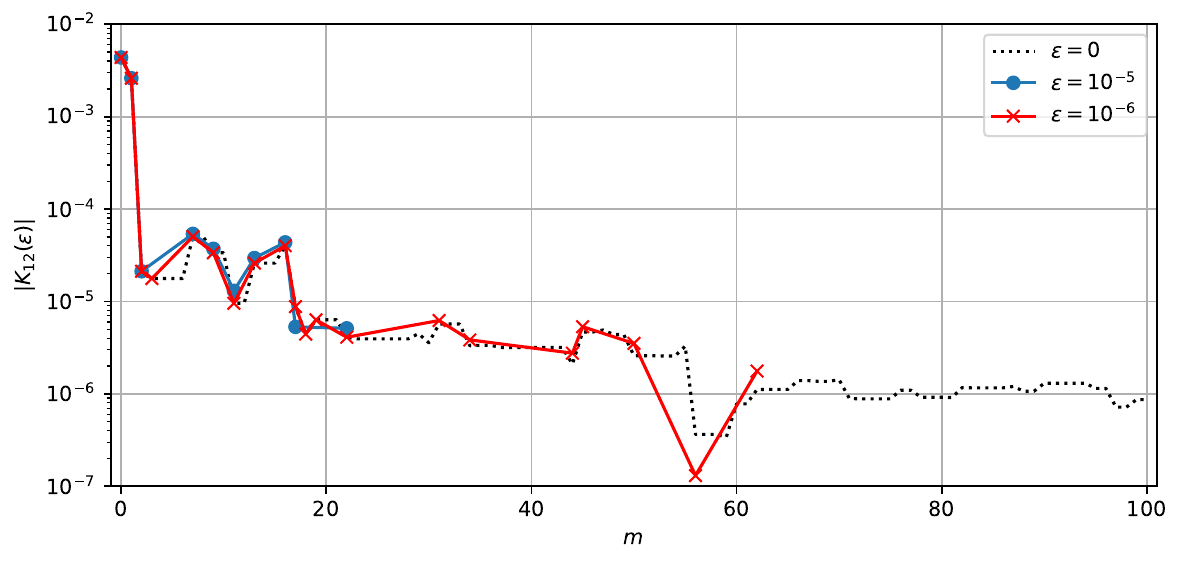}    
	\caption{Elements of the conductivity tensor $\widetilde{ K}(\epsilon)$ after filtering: diagonal (top) and off-diagonal (bottom).}
	\label{f-8}
\end{figure}

The presented computational data demonstrate that the effect of accounting for memory effects is achieved in our case already for $m = 3$. 
Taking into account a larger number of terms increases accuracy only slightly.
For $m = 3$, for the elements of the steady-state conductivity tensor $\widetilde{ K}^m$ we have
\[
 \widetilde{ K}^m_{11} = \widetilde{ K}^m_{22} = 1.97429 \cdot 10^{-4},
 \quad \widetilde{ K}^m_{12} = \widetilde{ K}^m_{21} = 1.77255 \cdot 10^{-5} .
\]
The corresponding parameters for accounting for memory effects are given in Table~\ref{tab-3}.
For this case, Fig.~\ref{f-9} shows the approximate solution of the problem at different time instants. 
The calculations were performed with a time step $\tau = 10^{-5}$. A rapid readjustment of the initial pressure profile is observed.
Upon reaching steady state, the solution of the problem taking into account memory effects coincides with the solution of the problem without memory (see Fig.~\ref{f-3} for $\gamma =3$).

\begin{table}[ht]
\centering
\caption{Parameters of the macroscale problem}
\label{tab-3}
\vspace{1ex}
\begin{tabular}{rrrr}
\toprule
 $k$ &   $\lambda_k$  & $D_{11}^k = D_{22}^k$  & $D_{12}^k = D_{21}^k$ \\
\midrule    
  1 &  40.352157 & -0.530804 & -0.530804 \\
  2 &  51.230012 & -0.367151 & 0.367151 \\
  3 &  114.352557 & 0.019996 & 0.019996 \\
\bottomrule
\end{tabular}
\end{table}

\begin{figure}[ht]
	\centering
	\includegraphics[width=0.49\linewidth]{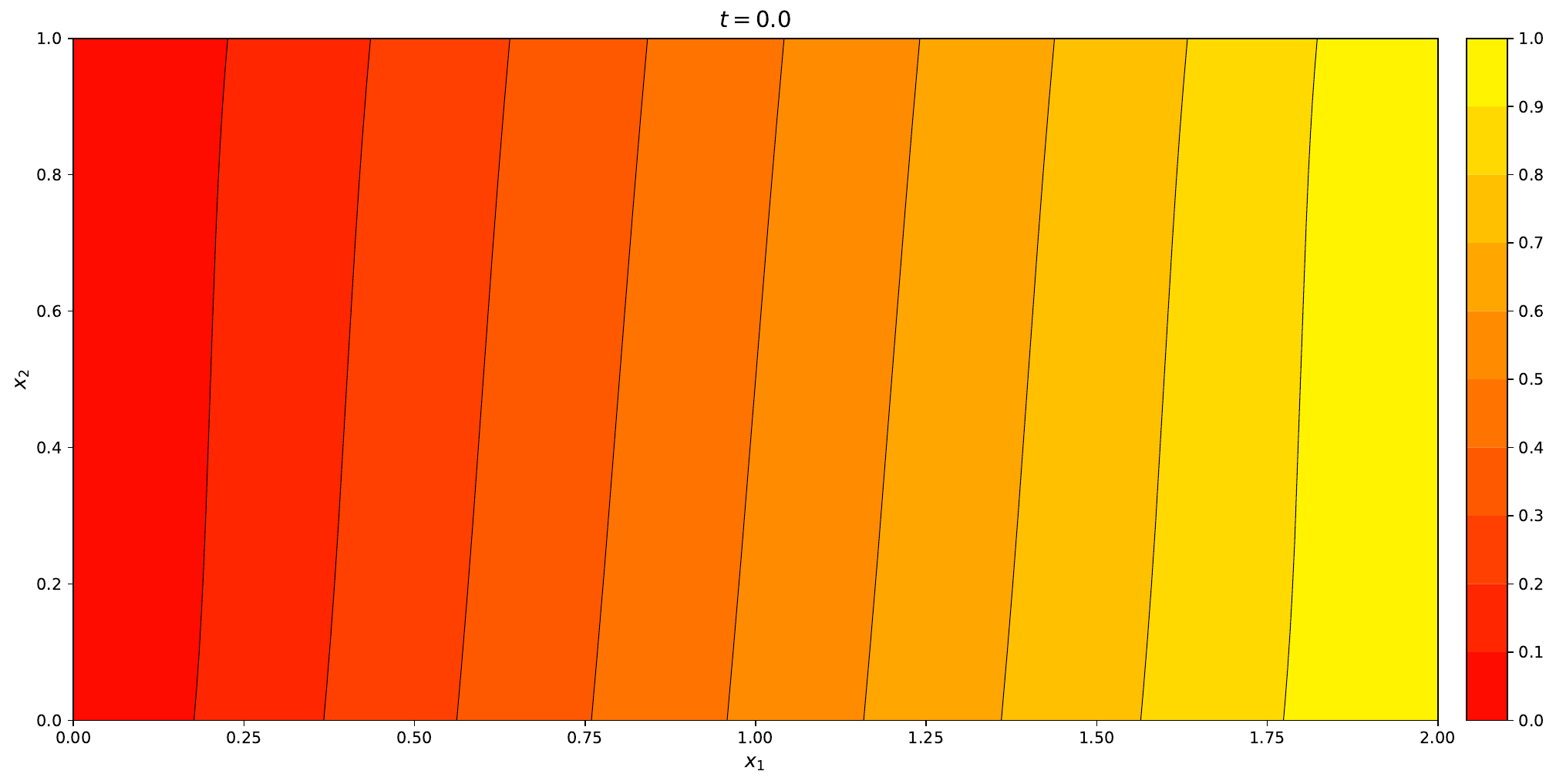}
	\includegraphics[width=0.49\linewidth]{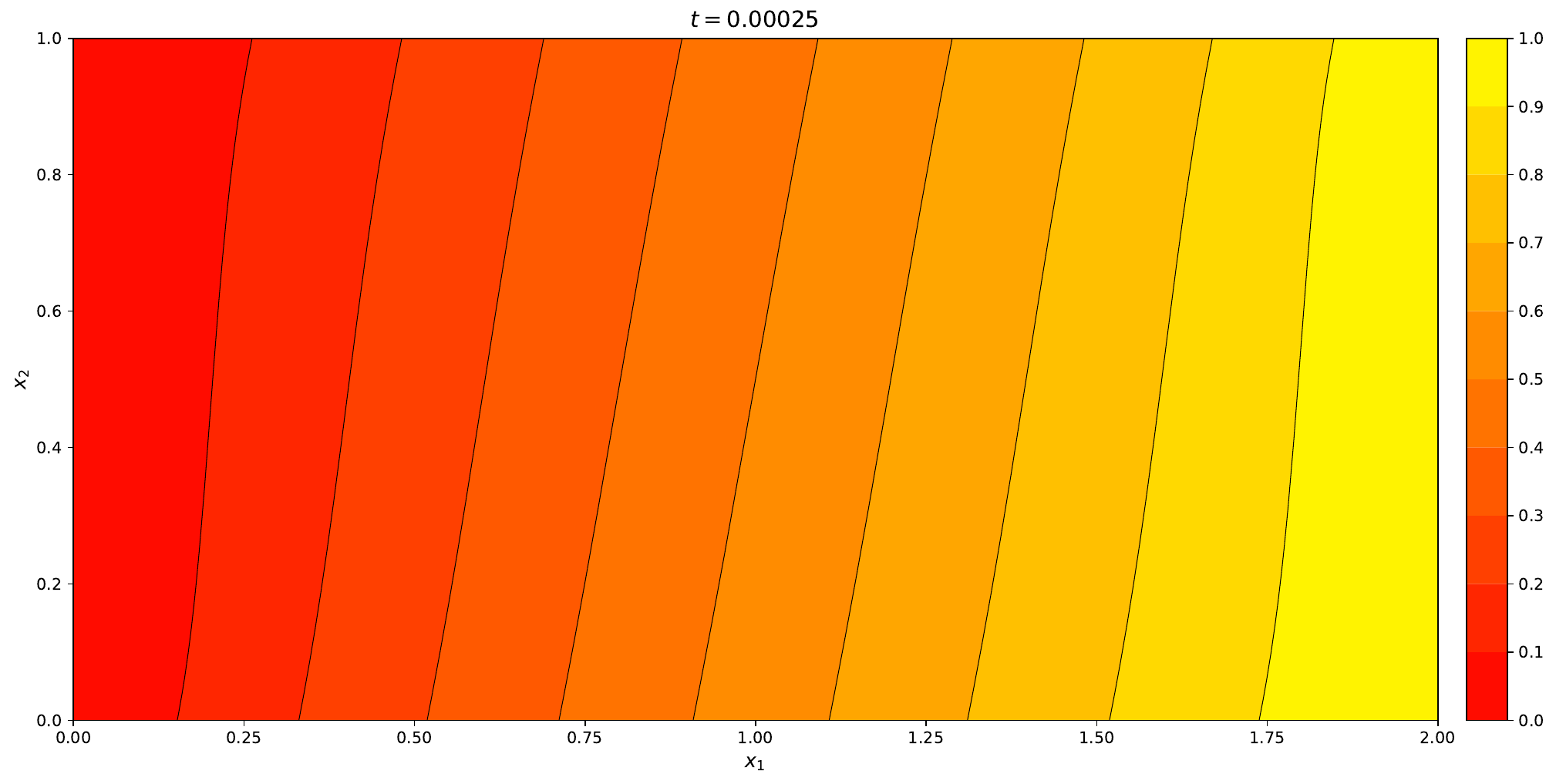}  \\
	\includegraphics[width=0.49\linewidth]{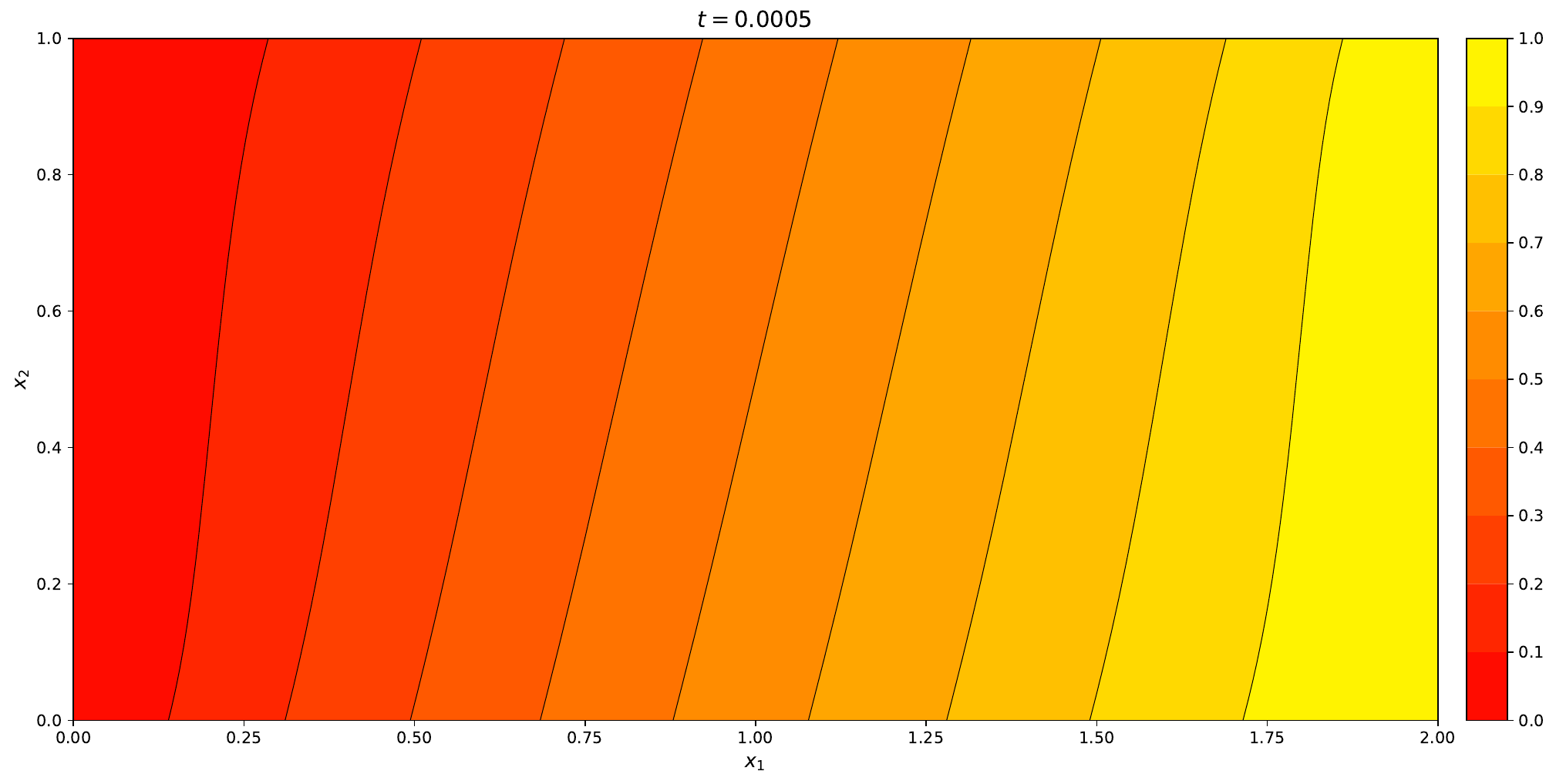}
	\includegraphics[width=0.49\linewidth]{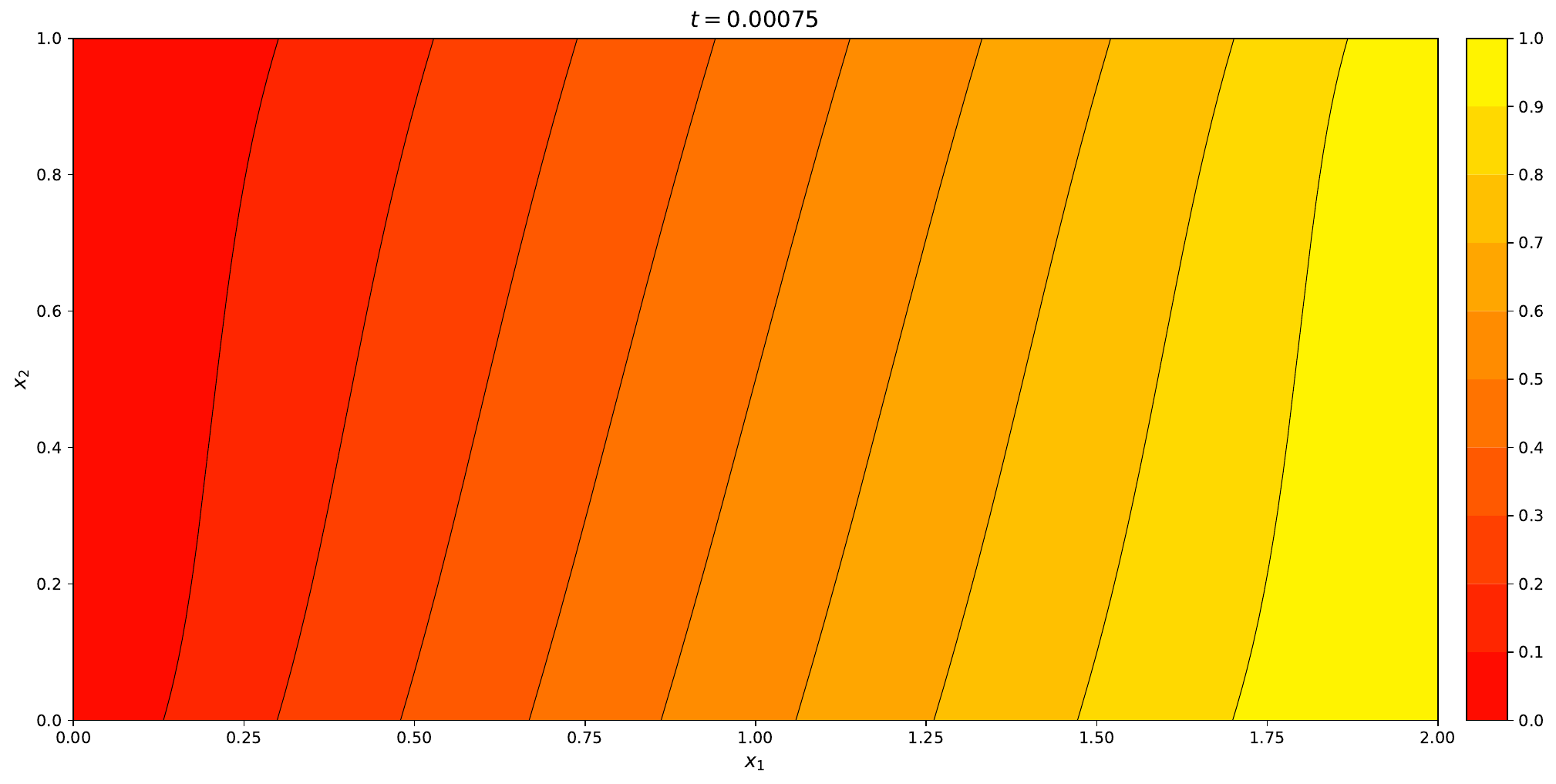}        
	\caption{Solution of the macroscale problem taking into account memory effects at different time instants.}
	\label{f-9}
\end{figure}

\clearpage

\section{Conclusions} \label{sec:6}

\begin{enumerate}[(1)]

\item An unsteady homogenization problem for flows in a porous medium is considered. At the microscale, the flow is described by the Stokes equations for an incompressible fluid, and at the macroscale by Darcy's law with memory. The components of the permeability tensor in the integro-differential equation are determined from the solution of an unsteady problem on the periodicity cell.

\item The proposed approach for accounting for memory effects is based on: (i) the standard determination of the effective permeability tensor from solutions of auxiliary steady-state boundary value problems on the periodicity cell at the microscale; (ii) approximation of the components of the tensor memory kernel as a sum of exponentials based on solving a partial Stokes spectral problem on the periodicity cell; and (iii) formulation of a local problem at the macroscale for an extended system of equations.

\item The computational implementation is based on the use of finite element spatial approximations and two-level difference schemes for time approximations. Unconditional stability estimates with respect to initial data are obtained for the solutions of the discrete problem, which are consistent with the corresponding estimates for the solution of the differential problem.

\item The performance of the developed computational algorithm and its efficiency are illustrated by calculations of a two-dimensional homogenization problem for unsteady flows in a porous medium.

\end{enumerate}

\end{document}